\documentclass[11pt]{article}
\usepackage{latexsym,amsmath,amsthm}
\usepackage{amssymb}
\usepackage{graphicx}
\usepackage{appendix}
\usepackage{tikz}
\usepackage[colorlinks=true, linkcolor=blue]{hyperref}
\everymath{\displaystyle}

\newtheorem{thm}{Theorem}[section]

\newtheorem{cor}[thm]{Corollary}
\newtheorem{lemma}[thm]{Lemma}
\newtheorem{conj}[thm]{Conjecture}

\theoremstyle{definition}

\theoremstyle{definition}

\theoremstyle{definition}
\newtheorem{defn}[thm]{Definition}

\theoremstyle{definition}
\newtheorem{ex}[thm]{Example}

\theoremstyle{definition}

\theoremstyle{definition}

\theoremstyle{remark}

\theoremstyle{remark}
\newtheorem{remark}[thm]{Remark}

\usepackage[margin=1.3in]{geometry}

\usepackage{url,xcolor}
\usepackage{microtype}

\newcommand{\NN}{\mathbb{N}}
\newcommand{\Sym}{\mathbf{Sym}}

\begin{document}

\title{Reciprocals of thinned exponential series} 

\author{John Engbers\thanks{Department of Mathematical and Statistical Sciences, Marquette University, Milwaukee, WI; john.engbers@marquette.edu. Research supported by the Simons Foundation Gift number 524418.} \and David Galvin\thanks{Department of Mathematics,
University of Notre Dame, Notre Dame IN; dgalvin1@nd.edu. Research supported in part by the Simons Foundation Gift number 854277.} \and Cliff Smyth\thanks{Department of Mathematics and Statistics, University of North Carolina at Greensboro, Greensboro NC; cdsmyth@uncg.edu. Research supported by the Simons Foundation Gifts 360468 and 965562.}}

\maketitle

\begin{abstract}

The reciprocal of $e^{-x}$ has a power series about $0$ in which all coefficients are non-negative. Gessel \cite{Gessel} considered truncates of the power series of $e^{-x}$, i.e. polynomials of the form $\sum_{n=0}^r (-1)^n\frac{x^n}{n!}$, and established combinatorially that the reciprocal of the truncate has a power series with all coefficients non-negative precisely when $r$ is odd.
    
Here we extend Gessel's observations to arbitrary ``thinned exponential series''.
To be precise, let $A \subseteq \{1,3,5,\ldots\}$ and $B \subseteq \{2,4,6,\ldots\}$, and consider the series  
\[
1-\sum_{a \in A} \frac{x^a}{a!} + \sum_{b \in B} \frac{x^b}{b!}.
\]
We consider conditions on $A$ and $B$ that ensure that the reciprocal series has all coefficients non-negative.  We give combinatorial proofs for a large set of conditions, including whenever $1 \in A$ and the endpoints of the maximal consecutive intervals in $A \cup B$ are odd integers. 
 
In particular, the coefficients in the reciprocal series can be interpreted as ordered set partitions of $[n]$ with block size restrictions, or in terms of permutations with restricted lengths of maximally increasing runs, suitably weighted.

\end{abstract}

\section{Introduction and Statement of Results}

The power series $\sum_{n=0}^\infty (-1)^n \frac{x^n}{n!}$ 
has a reciprocal with only non-negative coefficients; specifically, for real $x$, we have 
$$
\left(\sum_{n=0}^\infty (-1)^n \frac{x^n}{n!}\right)^{-1} = \left(e^{-x}\right)^{-1} = e^x = \sum_{n=0}^\infty \frac{x^n}{n!}.
$$
Gessel \cite{Gessel} observed that whether this non-negativity phenomenon extends to the reciprocals of partial sums of $\sum_{n=0}^\infty (-1)^n \frac{x^n}{n!}$ seems to depend on the parity of the point of truncation --- in particular, computation suggests that for all $m \in {\mathbb N}$,  $\left(\sum_{n=0}^{2m} (-1)^n \frac{x^n}{n!}\right)^{-1}$ has some negative coefficients, while $\left(\sum_{n=0}^{2m-1} (-1)^n \frac{x^n}{n!}\right)^{-1}$ has only non-negative coefficients. Gessel gives a lovely combinatorial proof of this latter fact, demonstrating that if the sequence $(c_n)_{n \geq 0}$ is defined via \[
\left(\sum_{n=0}^{2m-1} (-1)^n \frac{x^n}{n!}\right)^{-1} = \sum_{n \geq 0} c_n \frac{x^n}{n!},
\]
then $c_n$ counts the number of permutations of $[n]:=\{1, \ldots, n\}$ in which
every increasing run has length congruent to $0$ or $1$ modulo $2m$. This is a special case ($b=1, r=2$) of the following more general result from \cite{Gessel}:
\begin{thm} (\cite[Proposition 2.4]{Gessel}) \label{thm-Gessel}
Let $m$, $b$ and $r \geq 2$ be positive integers. The coefficient of $x^n/n!$ in
$$
\left(\sum_{k=0}^{m-1} \left(\frac{x^{krb}}{(krb)!}-\frac{x^{(kr+1)b}}{((kr+1)b)!}\right)\right)^{-1}
$$
is the number of permutations of $[n]$ in which every maximal increasing run has length congruent to one of $0, b, \ldots, (r-1)b$ modulo $mrb$.
\end{thm}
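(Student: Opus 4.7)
The plan is to reduce the statement, following Gessel's method for the $b=1,r=2$ special case, to a short one-variable generating-function identity via an ordered-partition-to-permutation bijection. Write $\phi(x)$ for the polynomial appearing in the theorem and set $\psi(x)=1-\phi(x)$. Since $\psi$ has no constant term, $1/\phi=\sum_{k\ge 0}\psi^{k}$ as formal power series. The usual exponential-generating-function interpretation then identifies $[x^n/n!]\bigl(1/\phi(x)\bigr)$ with the signed count of ordered set partitions $(B_1,\ldots,B_\ell)$ of $[n]$ whose block sizes all lie in
\[
S := \{krb : 1\le k\le m-1\}\ \cup\ \{(kr+1)b : 0\le k\le m-1\},
\]
where a block of size $krb$ contributes $-1$ and a block of size $(kr+1)b$ contributes $+1$.

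Next I would pass from ordered partitions to permutations by the standard map: arrange each block increasingly and concatenate to obtain a permutation $\pi$ of $[n]$. Each block is an increasing consecutive window of $\pi$, so every descent of $\pi$ must land at a block boundary. Hence the ordered partitions producing a given $\pi$ correspond exactly to subsets $C\subseteq[n-1]$ with $C\supseteq D(\pi)$ whose gap sequence (including boundary $0$ and $n$) lies in $S$, and the non-descent positions of $C$ lie strictly inside the maximal increasing runs of $\pi$. Grouping by $\pi$ and then by run, the signed sum factors as
\[
[x^n/n!]\bigl(1/\phi(x)\bigr)\ =\ \sum_{\pi\in\Sym_n}\ \prod_{i=1}^{p(\pi)}\tilde F\bigl(r_i(\pi)\bigr),\qquad \tilde F(N):=\sum_{\substack{N=c_1+\cdots+c_t\\ c_j\in S}}\prod_{j}\epsilon_{c_j},
\]
where $r_1(\pi),\ldots,r_{p(\pi)}(\pi)$ are the maximal increasing run lengths of $\pi$ and $\epsilon_s\in\{\pm 1\}$ is the sign above.

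It then suffices to prove that $\tilde F(N)=1$ if $N\bmod mrb\in\{0,b,2b,\ldots,(r-1)b\}$ and $\tilde F(N)=0$ otherwise; substituting into the display immediately yields the theorem. This is a purely univariate computation using $\sum_{N\ge 0}\tilde F(N)y^N=\bigl(1-\sum_{s\in S}\epsilon_s y^s\bigr)^{-1}$. Setting $u=y^b$, $v=y^{rb}$ and summing the two geometric subsums separately, one finds
\[
1-\sum_{s\in S}\epsilon_s y^s\ =\ (1-y^b)\bigl(1+y^{rb}+y^{2rb}+\cdots+y^{(m-1)rb}\bigr),
\]
so
\[
\sum_{N\ge 0}\tilde F(N)\,y^N\ =\ \frac{1+y^b+y^{2b}+\cdots+y^{(r-1)b}}{1-y^{mrb}},
\]
from which the claimed values of $\tilde F$ can be read off directly.

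The main obstacle is spotting and verifying the product factorization of $1-\sum_{s\in S}\epsilon_s y^s$ in the last step; the sign pattern of $\phi(x)$ is tuned precisely so that this telescoping succeeds. The bijective reduction in the first two paragraphs is a routine ordered-partition/permutation correspondence, and once $\tilde F(N)\in\{0,1\}$ is established the theorem reduces to noting that a permutation contributes exactly when each of its maximal runs has an allowed length.
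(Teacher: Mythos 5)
Your argument is correct: the expansion of the reciprocal as a signed sum over ordered set partitions with block sizes in $S$, the grouping of those partitions by the underlying permutation (preimages of $\pi$ are exactly the compositions of $n$ whose cut set contains the descent set of $\pi$, so the signed sum factors over maximal increasing runs), and the univariate identity $1-\sum_{s\in S}\epsilon_s y^s=(1-y^b)\bigl(1+y^{rb}+\cdots+y^{(m-1)rb}\bigr)$, giving $\sum_N \tilde F(N)y^N=\bigl(1+y^b+\cdots+y^{(r-1)b}\bigr)/\bigl(1-y^{mrb}\bigr)$, all check out, and since $ib<mrb$ for $0\le i\le r-1$ the reading-off of $\tilde F(N)\in\{0,1\}$ is valid. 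This is not the route the paper takes for its own main results on this statement's generalization: the paper's primary proof (Sections 3--4) is a sign-reversing involution $i_n$ on signed ordered set partitions that preserves the underlying permutation and whose fixed points are enumerated directly, recovering Gessel's interpretation in the special case $A\cup B=\{1,r,r+1,\ldots,(m-1)r+1\}$ (stretched by $b$). Your proof instead coincides in substance with the paper's second, algebraic route (Section 6): the ordinary-generating-function factorization you use is exactly the one appearing there, and your run-factorization step is precisely the content of the power-series corollary of the Run Theorem ($b_n=\sum_{L\models n}w_L\beta(L)$), except that you derive it elementarily by the ordered-partition-to-permutation correspondence rather than citing noncommutative symmetric function theory. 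What your version buys is a short, self-contained proof of this particular theorem with no machinery beyond formal power series; what the paper's involution proof buys is a bijective explanation (and explicit fixed-point description) that extends to all odd-ended $A\cup B$, while the Run Theorem formulation isolates the reusable general principle behind your grouping step.
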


In particular, the coefficients in the reciprocal series are all non-negative.

Inspired by this result, we take a combinatorial approach to extend the scope of Gessel's observations. 

\medskip

To state our results precisely, let $\mathcal{O}=\{1,3,5,\ldots\}$ denote the odd natural numbers and $\mathcal{E} = \{2,4,\ldots\}$ denote the positive even natural numbers, and let $A \subseteq \mathcal{O}$ and $B \subseteq \mathcal{E}$. Consider the power series
\[
F_{A,B}(x) = 1- \sum_{a \in A} \frac{x^a}{a!} + \sum_{b \in B}  \frac{x^b}{b!},
\]
and let 
\[
G_{A,B}(x) = (F_{A,B}(x))^{-1} = \sum_{n=0}^{\infty} c_n \frac{x^n}{n!}
\]
be the reciprocal of $F_{A,B}(x)$.  A natural question to ask is: under what conditions does the power series representation of $G$ about $0$ have all coefficients $c_n$ non-negative?  And when $G$ does have all coefficients $c_n$ non-negative, is there a natural set of objects that $c_n$ counts? For results on these questions and their combinatorial interpretations in the context of {\em compositional} inverses, see e.g. \cite{EGS}.

If we are content to interpret $c_n$ as the number of elements in a signed set (a set in which each element has weight either $1$ or $-1$), or, equivalently, as the difference between the sizes of two sets, then the matter is straightforward, as we show next.  Write ${\mathcal P}^{\rm pos}_{n,A,B}$ for the set of ordered set partitions $P=P_1/\cdots/P_\ell$ of $[n]$, in which all block sizes are in $A \cup B$, and which have an even number of blocks with sizes from $B$.  Write ${\mathcal P}^{\rm neg}_{n,A,B}$ for those with an odd number of blocks with sizes from $B$. 
Note that 
\[
G_{A,B}(x) = \frac{1}{F_{A,B}(x)} = \sum_{k=0}^{\infty} \left( \sum_{a \in A} \frac{x^{a}}{a!} - \sum_{b \in B} \frac{x^{b}}{b!} \right)^k.
\]
The coefficient of $x^n/n!$ in the final expression above is a sum over compositions of $n$ with all blocks coming from $A\cup B$; the sign of the summand corresponding to a composition is positive if the composition has an even number of blocks from $B$, and negative otherwise, and the magnitude is the multinomial coefficient $\binom{n}{a_1, a_2, \ldots, b_1, b_2, \ldots}$, where the $a_i$'s and $b_i$'s are the blocks of the composition. It immediately follows that we have that the coefficient of $x^n/n!$ in $(F_{A,B}(x))^{-1}$ is
\[
|\mathcal{P}^{\rm pos}_{n,A,B}| - |\mathcal{P}^{\rm neg}_{n,A,B}|.
\]

The drawback to this interpretation is that it gives no immediate information about the sign of the coefficient, nor any obvious combinatorial explanation of why the sign might be always non-negative for certain choices of $A$ and $B$. One strategy for showing that $G_{A,B}(x)$ has all coefficients non-negative is to exhibit an injection $i$ from ${\mathcal P}^{\rm neg}_{n,A,B}$ into ${\mathcal P}^{\rm pos}_{n,A,B}$ or, equivalently, an involution $\tau$ of the restricted partitions in ${\mathcal P}^{\rm neg}_{n,A,B} \cup {\mathcal P}^{\rm pos}_{n,A,B}$ that moves all elements in ${\mathcal P}^{\rm neg}_{n,A,B}$ to elements in ${\mathcal P}^{\rm pos}_{n,A,B}$ and fixes the elements in ${\mathcal P}^{\rm pos}_{n,A,B} \setminus \tau({\mathcal P}^{\rm neg}_{n,A,B})$. 

\medskip

As discussed earlier, Gessel \cite{Gessel} observes that in particular when $A \cup B=\{1, \ldots 2m-1\}$ then $G_{A,B}(x)$ has all coefficients non-negative. We obtain a broad generalization of this (Theorem \ref{thm-odd-ended} below). We will need the following definitions.

\begin{defn}
For $A \subseteq  \mathcal{O}$ and $B \subseteq \mathcal{E}$, we say that $A \cup B$ is {\em odd-ended} if, when it is written as the union of maximal intervals of consecutive integers, the endpoints of each interval are odd numbers.  A maximal element in a maximal interval is called a {\em top element} and a minimal element in a maximal interval is called a {\em bottom element}.
\end{defn}
 
\begin{ex}
The set $\{1, \ldots, 2m-1\}$ is odd-ended, and only contains one maximal interval, with top element $2m-1$ and bottom element $1$.

The set $\{1,5,9,10,11,12,13,19,20,21,22,\ldots\}=\{1\}\cup\{5\}\cup[9,13]\cup[19,\infty)$ is odd-ended and has four maximal intervals.  The top elements are $1$, $5$, and $13$, while the bottom elements are $1$, $5$, $9$, and $19$.
\end{ex}

Our first main result is as follows.
\begin{thm} \label{thm-odd-ended}
Suppose that $A \subseteq \mathcal{O}$ and $B \subseteq \mathcal{E}$ with $A \cup B$ odd-ended and $1 \in A$.  

Define $(c_n)_{n \geq 0}$ via 
\[
\left(1-\sum_{a \in A} \frac{x^a}{a!} + \sum_{b \in B} \frac{x^b}{b!} \right)^{-1} = \sum_{n \geq 0} c_n \frac{x^n}{n!}.
\]
Then the values $c_n$ are all non-negative integers, and moreover have a natural combinatorial interpretation as the number of ordered set partitions of $[n]$ whose blocks are either size $1$ or size that is an endpoint of the maximal intervals, and these block sizes appear in particular allowable orders as described in Corollary \ref{cor-partitioninterp}. 
\end{thm}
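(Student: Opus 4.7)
The plan is to prove both claims by a sign-reversing involution argument. From the expansion derived in the introduction,
\[
c_n = |\mathcal{P}^{\rm pos}_{n,A,B}| - |\mathcal{P}^{\rm neg}_{n,A,B}|,
\]
I would construct an involution $\tau$ on the signed set $\mathcal{P}^{\rm pos}_{n,A,B} \cup \mathcal{P}^{\rm neg}_{n,A,B}$ that pairs every element of $\mathcal{P}^{\rm neg}_{n,A,B}$ with one of $\mathcal{P}^{\rm pos}_{n,A,B}$, leaving as fixed points exactly the ordered set partitions enumerated in Corollary~\ref{cor-partitioninterp}.

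The hypotheses provide the local moves for $\tau$. Since $A \cup B$ is odd-ended, inside every maximal interval $[t,u]$ of $A \cup B$ the endpoints $t$ and $u$ are odd (so both lie in $A$), and any non-bottom size $s \in [t,u]$ has $s-1 \in [t,u]$ of the opposite parity, so exactly one of $s,s-1$ belongs to $B$. Since $1 \in A$, singletons are always $A$-blocks. This yields two mutually inverse local operations that flip the parity of the number of $B$-blocks: a \emph{split} removing a distinguished element (for concreteness, $\min P_i$) from a non-bottom-sized block $P_i$ and placing it as a new singleton immediately after $P_i$; and the inverse \emph{merge} of a pair $(P_i, \{m\})$ --- with $|P_i|+1$ in the same maximal interval as $|P_i|$ and $m$ the distinguished element of $P_i \cup \{m\}$ --- into the single block $P_i \cup \{m\}$.

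The involution $\tau$ is then defined by a left-to-right scan of the blocks $P_1, P_2, \ldots, P_\ell$, locating the leftmost position at which the ordered set partition deviates from the ``canonical form'' specified in Corollary~\ref{cor-partitioninterp}, and applying at that position the unique split or merge dictated by the rules; otherwise $P$ is a fixed point. The rules must be calibrated so that the image has its first non-canonical position at the same index and the prescribed move there is the inverse of the one just applied, giving $\tau^2 = \mathrm{id}$. The identification of the fixed-point set with the canonical partitions of Corollary~\ref{cor-partitioninterp}, and hence the non-negativity and combinatorial meaning of $c_n$, then follow from a case analysis on the first non-canonical configuration.

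The main obstacle is pinning down the scan rules so that $\tau$ is simultaneously an involution and has the correct fixed-point set. Two delicate points must be addressed. First, when both a split and a merge could be applied at the same position --- for instance, when $|P_i|$ is interior to its maximal interval and $P_{i+1}$ is a singleton eligible for merging --- a tie-breaking rule must be fixed consistently with the ``min'' convention in the split, so that, after a split, the newly-created singleton satisfies the merge condition and reverses the move. Second, top-sized blocks must remain fixed in canonical positions but be split in non-canonical ones, which requires a more global criterion --- encoded in the canonical form of Corollary~\ref{cor-partitioninterp} --- rather than a purely local comparison of block sizes. Coordinating these rules with the allowable-order structure of Corollary~\ref{cor-partitioninterp} is what ultimately makes $\tau$ work.
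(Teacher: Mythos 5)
Your overall strategy is the paper's strategy: interpret $c_n$ as $|\mathcal{P}^{\rm pos}_{n,A,B}|-|\mathcal{P}^{\rm neg}_{n,A,B}|$ and build a sign-reversing involution out of local merge/split moves whose fixed points are the partitions counted in Corollary~\ref{cor-partitioninterp}. But the proposal stops exactly where the actual work begins: you never define the involution. You say the scan rules ``must be calibrated'' so that the image's first non-canonical position coincides with the one just modified and the prescribed move there reverses the previous one, and you correctly flag the two delicate points (tie-breaking between an available split and an available merge, and the need for a non-local criterion for when a top-sized block is split). Resolving precisely these points is the substance of the paper's proof: it gives an explicit algorithm (a right-to-left scan with merge, split, block-skip and, crucially, \emph{freeze-skip} moves), proves involutivity by induction on $n$ case by case, and only then characterizes the fixed points (Lemma~\ref{lem-topbottomelts}), which is what makes Corollary~\ref{cor-partitioninterp} meaningful. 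The example $134/28/567/9$ with $A\cup B=\{1,2,3\}$ in the paper shows that the naive ``apply the unique local move at the first bad position'' rule genuinely fails to be an involution; some blocks that could be split must instead be frozen together with a neighboring singleton, and deciding when requires the freeze-skip mechanism you have not supplied.

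There is also a circularity in the way you propose to patch this: you define $\tau$ by comparing against ``the canonical form specified in Corollary~\ref{cor-partitioninterp},'' but that canonical form ($A,B$-goodness, Definition~\ref{def-i_nfixed}) is itself phrased in terms of freeze-skip pairs, i.e.\ in terms of the algorithm, and in the paper it is \emph{derived} as the fixed-point set after the involution is constructed, not assumed beforehand. Until you give an independent, move-free description of the allowable orders and then verify both $\tau^2=\mathrm{id}$ and the fixed-point identification against it, the argument is a plan rather than a proof. A further caution: your choice to split off $\min P_i$ (rather than the largest element, as the paper does) destroys the property that the underlying permutation is preserved; the paper leans on that invariance both in the inductive verification of involutivity for the skip cases and in the run-length interpretation (Lemma~\ref{lem-w_sigma}, Corollary~\ref{cor-perminterp}), so with the min convention you would have to rebuild those steps as well.
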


In Corollary \ref{cor-perminterp}, we illustrate that these coefficients count the permutations with particular increasing run lengths, suitably weighted, which extends the results of Gessel \cite{Gessel}.   The coefficients $c_n$ for $A \cup B = [3]$ are given as sequence A317111 in OEIS.  Those for $A \cup B = [5]$, $[7]$, and $[9]$ are given as sequences A322262, A322282, and A322283 in OEIS, respectively \cite{OEIS}.  Those for $A \cup B = \{1,3\}$ or $A \cup B = \{1,2,3,5\}$ are simple examples where $A \cup B$ is odd-ended whose corresponding coefficient sequences do not appear in OEIS.

\medskip

We also incorporate the integer parameters $b \geq  1$ and $r \geq 2$ into our results, which we refer to as the {\em $b$-stretched} and {\em $r$-stretched} versions of our main result.  The idea is to stretch the set $\{1,2,3,4,\ldots\}$ to $\{1,r,r+1,2r,\ldots\}$, and then multiply each entry by $b$ to obtain $\{b,br,b(r+1),2br,\ldots\}$.  We then suppose that $A \subseteq {\mathcal O}^* := \{b,b(r+1),\ldots \}$ (the stretched odds) and $B \subseteq {\mathcal E}^* := \{br,2br,\ldots\}$ (the stretched evens).  We will again say that $A \cup B$ is {\em odd-ended} if the endpoints of the (stretched) maximal intervals again are ``odd''; 
in particular $A \cup B$ is odd-ended if $kr \in B$ implies $kr+1 \in A$ and $(k-1)r+1 \in A$.  We make similar modifications to definitions throughout in the stretched context.

\begin{thm}\label{thm-odd-endedstretched}
Suppose that $A \subseteq {\mathcal O}^*$ and $B \subseteq {\mathcal E}^*$ with $b \in A$ and $A \cup B$ odd-ended. Define $(c_n)_{n \geq 0}$ via 
\[
\left(1-\sum_{a \in A} \frac{x^a}{a!} + \sum_{b \in B} \frac{x^b}{b!} \right)^{-1} = \sum_{n \geq 0} c_n \frac{x^n}{n!}.
\]
Then the values $c_n$ are all non-negative integers, and moreover have a natural combinatorial interpretation as the number of ordered set partitions of $[n]$ whose blocks are either size $1$ or size that is an endpoint of the stretched maximal intervals, and these block sizes appear in particular allowable orders as stated in Corollary \ref{cor-partitioninterpstretched}.
\end{thm}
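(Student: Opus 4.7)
The plan is to mirror the sign-reversing involution strategy used for Theorem \ref{thm-odd-ended}, with the stretched sequence $\{b, br, b(r+1), 2br, b(2r+1),\ldots\}$ playing the role of $\{1,2,3,4,5,\ldots\}$ throughout. The derivation of
\[
c_n = |\mathcal{P}^{\rm pos}_{n,A,B}| - |\mathcal{P}^{\rm neg}_{n,A,B}|
\]
via the geometric series expansion of $1/F_{A,B}(x)$ is unchanged, so it suffices to build an involution $\tau$ on $\mathcal{P}^{\rm pos}_{n,A,B}\cup\mathcal{P}^{\rm neg}_{n,A,B}$ that reverses the parity of the number of $B$-blocks, whose fixed points all lie in $\mathcal{P}^{\rm pos}$ and coincide with the objects of Corollary \ref{cor-partitioninterpstretched}.

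The key observation is that the element $b\in A$ now plays the role that $1\in A$ played before: it is the minimum-size block always available and the atomic unit by which block sizes can be incremented or decremented. The odd-ended hypothesis asserts that whenever a stretched-even size $kbr$ appears in $B$, both adjacent stretched-odd sizes $b((k-1)r+1)$ and $b(kr+1)$ lie in $A$; these differ from $kbr$ by exactly $b$, so that a single size-$b$ block can be split off from, or merged into, a neighboring block to move between consecutive stretched sizes within a maximal interval.

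I would define $\tau$ by scanning an ordered set partition $P_1/\cdots/P_\ell$ from left to right, locating the first block $P_i$ whose size is either in $B$ or is an interior (non-endpoint) stretched-odd size in its maximal interval, and performing a canonical merge-or-split with an adjacent size-$b$ block (for instance, preferring the neighbor on a prescribed side, and on inversion, placing the newly created $b$-block on the same side). Each such operation flips the parity of $|B|$-blocks. The odd-endedness hypothesis ensures that whenever the targeted block sits at the top or bottom of its maximal interval, the only available direction of movement is the one compatible with the canonical rule and lands inside $A\cup B$; the hypothesis $b\in A$ guarantees the size-$b$ block needed for any split is itself admissible. The fixed points are exactly the partitions in which no reducible block configuration ever appears, i.e. whose blocks are all of size $b$ or of a maximal-interval endpoint size and occur only in the non-reducible orderings, which is the content of Corollary \ref{cor-partitioninterpstretched}.

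The main obstacle, as in the non-stretched case, is careful case analysis at the boundaries of the maximal intervals in $A\cup B$: one must verify that the canonical choice of left/right neighbor is consistent across merges and splits so that $\tau^2 = \mathrm{id}$, and that a block at the extreme top or bottom of a maximal interval is handled by the canonical rule in a way that stays within $A\cup B$. Aside from this bookkeeping, the argument is structurally the same as for Theorem \ref{thm-odd-ended}, with the stretched sequence treated uniformly throughout.
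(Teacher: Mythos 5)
There is a genuine gap, and it sits exactly where the paper says the new difficulty lies ($r>2$). Your scheme rests on the claim that the two stretched-odd neighbors of a stretched-even size $kbr$ both differ from it by exactly $b$, so that every transition between consecutive allowed sizes can be made by merging or splitting a single size-$b$ block. That is false: consecutive elements of $\{b,br,b(r+1),2br,b(2r+1),\ldots\}$ differ alternately by $b$ and $b(r-1)$. Concretely, $b(kr+1)-kbr=b$, but $kbr-b((k-1)r+1)=b(r-1)$. So while the move $kbr \leftrightarrow b(kr+1)$ is a legitimate single-$b$ merge/split, splitting a single $b$-block off a block of size $kbr$ produces size $b(kr-1)$, and merging a $b$-block into a block of size $b(jr+1)$ produces size $b(jr+2)$; for $r>2$ neither of these is $\equiv 0,1 \pmod r$ (after dividing by $b$), so the image leaves $\mathcal{P}^{\rm pos}_{n,A,B}\cup\mathcal{P}^{\rm neg}_{n,A,B}$ altogether. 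Consequently your map is not even well defined on the relevant set, let alone a sign-reversing involution. The paper's proof handles this by introducing two kinds of moves --- a $1$-merge/$1$-split (one singleton with a block of size $kr$) and an $(r-1)$-merge/$(r-1)$-split (a block of size $kr+1$ with $r-1$ singletons moved simultaneously) --- together with $r$-freeze skips for top elements; making these compatible so that $i_n^2=\mathrm{id}$ is the substantive case analysis (Cases A1--D, especially B1--B5), not bookkeeping. (The $b>1$ part of your plan is fine and matches the paper: blow each element up into a group of $b$.)

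A secondary problem: your proposed fixed points are all partitions containing no block of ``bad'' size, i.e.\ all partitions whose block sizes are $b$ or interval endpoints. That set is strictly larger than the set described in Corollary \ref{cor-partitioninterpstretched}, whose characterization also imposes ordering constraints coming from the freeze-skip structure (already in the $r=2$ case, with $A\cup B=\{1,3,4,5\}$ the partition $123/4/5/6$ has only allowed endpoint sizes but is \emph{not} fixed by the paper's involution, since it maps to $1234/5/6$). So even setting aside the arithmetic issue, a left-to-right ``fix the first bad block'' involution with those fixed points cannot yield the stated combinatorial interpretation, and indeed cannot exist, since its fixed-point count would disagree with $c_n$. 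Any correct involution must act nontrivially on some partitions all of whose block sizes are admissible, which is precisely what the paper's merge/split-from-the-right with freeze skips accomplishes.
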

In Corollary \ref{cor-perminterpstretched}, we illustrate that these coefficients count the permutations with particular increasing run lengths, suitably weighted.

We give an alternate proof Theorem \ref{thm-odd-ended} and Theorem \ref{thm-odd-endedstretched} in Section \ref{sec-run-theorem}. That proof apeals to the ``Run Theorem'' in non-commutative symmetric function theory (Theorem \ref{thm-run-theorem}  \cite{Gesselthesis}) as well as some algebraic manipulations of generating functions.  In contrast, the proofs of Theorems \ref{thm-odd-ended} and \ref{thm-odd-endedstretched} given in Sections \ref{sec-pfofodd-ended} and \ref{sec-stretched} are fully combinatorial. Those proofs also involve sign-reversing involutions, a tactic used in \cite{EGS} to give non-negativity results and combinatorial formulas for the coefficients of the \emph{compositional inverses} of a large class of power series. This technique may be useful in proving non-negativity of coefficients in other situations as well.

Note that the case $b=1$ and $r=2$ in Theorem \ref{thm-odd-endedstretched} is simply Theorem \ref{thm-odd-ended}. The specifics of our combinatorial interpretations, which are direct extensions of the $r=2$ version, are stated in Corollaries \ref{cor-partitioninterpstretched} and \ref{cor-perminterpstretched}. Extending the results of Theorem \ref{thm-odd-ended} to $b>1$ is straight-forward, while extending to $r>2$ requires a bit more care. 

When we consider the $r$-stretched version from Theorem \ref{thm-odd-endedstretched} with $b=1$, in the special case where $A \cup B = \{1,r,r+1,2r,\ldots\}$, we can interpret 
\[
\frac{1}{1-x+\frac{x^{r}}{r!}-\frac{x^{r+1}}{(r+1)!}+\frac{x^{2r}}{(2r)!}-\frac{x^{2r+1}}{(2r+1)!}+\cdots}
\]
as a series whose coefficients count the number of permutations on $[n]$ so that the increasing runs have length at most $r-1$, which recovers a result of David and Barton (\cite[pp. 156-157]{DavidBarton}, see also for example \cite{GesselZhuang}). See Section \ref{subsec-GesselStretched}.

\medskip

Gessel considers the following more general situation (see \cite[Theorem 3.3]{Gessel}). 
Suppose $a(x) = a_0 + a_1x + \cdots + a_dx^d$ with $a_0=1$, and further suppose that $1/a(x)$ is an (ordinary) power series with all coefficients $0$ or $1$.
Then the coefficients of the reciprocal of the {\em exponential} series $1/(a_0 + a_1x/1! + \cdots + a_dx^d/d!)$ are non-negative, and the coefficients have a combinatorial interpretation in terms of lengths of permutation runs. Further, there is a necessary and sufficient condition for such $a(x)$ to meet the $0$ and $1$ reciprocal coefficients property \cite[Theorem 3.2]{Gessel}. Our Theorem \ref{thm-odd-ended} gives an interpretation when $A \cup B = \{1,5,6,7\}$, that is, for the coefficients for the expression $1/(1-x-x^5/5!+x^6/6!-x^7/7!)$. This is not covered by Gessel's theorem, as $$\frac{1}{1-x-x^5+x^6-x^7} = 1+x+x^2+x^3+x^4+2x^5+\cdots$$ and so not every coefficient is a $0$ or $1$. Of course, as we observe in Section \ref{sec-run-theorem}, cases like this can also be recovered from the Run Theorem.

\medskip

Returning to the non-stretched setting, an obvious question is to determine necessary and sufficient conditions (on $A, B$) for $G_{A,B}(x)$ to be non-negative.  We show a necessary condition, under the requirement that $1 \in A$, in the following. Recall that when $A \cup B = \{1,2,3,4,\ldots\}$, equivalently when  $\{1,2,3,\ldots\} \setminus (A \cup B) = \varnothing$, the reciprocal has all coefficients non-negative.

\begin{thm}\label{thm-necessary}
Suppose that $A \subseteq \mathcal{O}$ and $B \subseteq \mathcal{E}$ with $1 \in A$. 
Define $(c_n)_{n \geq 0}$ via 
\[
\left(1-\sum_{a \in A} \frac{x^a}{a!} + \sum_{b \in B} \frac{x^b}{b!} \right)^{-1} = \sum_{n \geq 0} c_n \frac{x^n}{n!}.
\]
If the smallest positive number in $\{1,2,3,\ldots\} \setminus (A \cup B)$ is $2m+1$, then the coefficient $c_{2m+2}$ is negative.
\end{thm}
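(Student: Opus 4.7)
The plan is to compute $c_{2m+2}$ by a direct power-series argument, exploiting the observation that under the hypotheses the series $F_{A,B}(x)$ is forced to agree with a truncation of $e^{-x}$ through the degrees that matter. Since $1 \in A$ and $2m+1$ is the smallest positive integer missing from $A \cup B$ (so in particular $m \geq 1$), we must have $\{1,3,\ldots,2m-1\}\subseteq A$ and $\{2,4,\ldots,2m\}\subseteq B$. Thus the coefficient of $x^k/k!$ in $F_{A,B}(x)$ is $(-1)^k$ for $0 \leq k \leq 2m$, is $0$ for $k=2m+1$, and equals $\epsilon \in \{0,1\}$ for $k=2m+2$ (with $\epsilon = 1$ iff $2m+2 \in B$). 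Writing $T_m(x) := \sum_{k=0}^{2m} (-x)^k/k!$, this yields
\[
F_{A,B}(x) \equiv T_m(x) + \epsilon \frac{x^{2m+2}}{(2m+2)!} \pmod{x^{2m+3}},
\]
and since $c_{2m+2}$ depends only on the coefficients of $F_{A,B}$ through degree $2m+2$, it may be computed from this truncated expression.

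The central step is to determine $1/T_m(x)$ modulo $x^{2m+3}$, which I would accomplish indirectly by first computing the product $T_m(x)\cdot e^x$. Writing $T_m(x)\cdot e^x = \sum_{n\geq 0} \alpha_n x^n/n!$ with $\alpha_n = \sum_{k=0}^{\min(n,2m)}\binom{n}{k}(-1)^k$, the identity $\sum_{k=0}^n \binom{n}{k}(-1)^k = 0$ for $n \geq 1$ forces $\alpha_n = 0$ for $1 \leq n \leq 2m$, and a brief calculation gives $\alpha_{2m+1}=1$ and $\alpha_{2m+2}=2m+1$. Hence
\[
T_m(x)\cdot e^x \equiv 1 + \frac{x^{2m+1}}{(2m+1)!} + (2m+1)\frac{x^{2m+2}}{(2m+2)!} \pmod{x^{2m+3}}.
\]
Inverting the right-hand side geometrically and multiplying by $e^x$, extraction of the coefficient of $x^{2m+2}/(2m+2)!$ in $1/T_m(x)$ gives the value $-(4m+2)$.

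Finally, a geometric step accounts for the $\epsilon$ perturbation: we get $1/F_{A,B}(x) \equiv 1/T_m(x) - \epsilon\, x^{2m+2}/(2m+2)! \pmod{x^{2m+3}}$, so the coefficient of $x^{2m+2}/(2m+2)!$ decreases by a further $\epsilon$. Thus $c_{2m+2} = -(4m+2) - \epsilon$, which is strictly negative because $m \geq 1$ and $\epsilon \geq 0$. There is no serious obstacle; the key point is the binomial-sum identity that collapses $T_m(x)\cdot e^x$ to only three nonzero terms modulo $x^{2m+3}$, after which the remainder is a short bookkeeping calculation.
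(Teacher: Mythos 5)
Your proof is correct, and it arrives at exactly the same value as the paper: $c_{2m+2}=-(4m+2)$ when $2m+2\notin A\cup B$ and $-(4m+2)-1$ when $2m+2\in B$ (the paper writes these as $-2(2m+1)$ and $-2(2m+1)-1$). But your route is genuinely different. The paper proves the theorem combinatorially: it runs the sign-reversing involution $i_{2m+2}$ from the proof of Theorem \ref{thm-odd-ended} on ordered set partitions of $[2m+2]$ with block sizes in $A\cup B$, classifies the fixed points (the $2m+2$ partitions of shape $1/2m/1$ with the last $2m+1$ entries increasing, the $2m+1$ partitions where the rightmost singleton is out of order and the next two blocks form a freeze-skip pair, and the all-singletons decreasing permutation), and sums their signs. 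You instead reduce everything to a truncated power-series computation: since $1\in A$ and $2m+1$ is the smallest gap, $F_{A,B}$ agrees with $T_m(x)+\epsilon x^{2m+2}/(2m+2)!$ modulo $x^{2m+3}$, the binomial identity collapses $T_m(x)e^x$ to $1+x^{2m+1}/(2m+1)!+(2m+1)x^{2m+2}/(2m+2)!$ modulo $x^{2m+3}$, and a geometric inversion plus coefficient extraction gives $-(4m+2)-\epsilon$; all steps check out (including that the reciprocal's coefficient of $x^{2m+2}$ only depends on $F_{A,B}$ through degree $2m+2$, and that $u^2\equiv 0$ in the geometric step since $4m+2\geq 2m+3$ for $m\geq 1$). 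Your argument is shorter and self-contained, at the cost of the combinatorial interpretation: the paper's point in this section is precisely that the negativity can be exhibited by the same involution machinery used elsewhere (a ``combinatorial proof of Gessel's initial observation''), whereas your computation, while cleaner as pure verification, does not identify which signed objects are responsible for the negative count.
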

In other words, if the initial interval in $A \cup B$ ends in an even number, then the reciprocal has a negative coefficient. This provides a combinatorial proof of Gessel's initial observation in \cite{Gessel} when $A \cup B = [2m]$.  

Computational evidence suggests that (still in the special case $1 \in A$) if just the initial interval ends in an odd number, then the reciprocal has non-negative coefficients.  

\begin{conj}\label{conj-sufficient}
Suppose that $A \subseteq \mathcal{O}$ and $B \subseteq \mathcal{E}$ with $1 \in A$.  

Define $(c_n)_{n \geq 0}$ via 
\[
\left(1-\sum_{a \in A} \frac{x^a}{a!} + \sum_{b \in B} \frac{x^b}{b!} \right)^{-1} = \sum_{n \geq 0} c_n \frac{x^n}{n!}.
\]
If the smallest positive number in $\{1,2,3,\ldots\} \setminus (A \cup B)$ is $2m$, then all coefficients $c_n$ are non-negative.
\end{conj}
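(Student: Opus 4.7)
The plan is to first reduce the conjecture to a single extremal admissible case using monotonicity, and then attack that case with a sign-reversing involution or, alternatively, via the Run Theorem of Section \ref{sec-run-theorem}.

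For the reduction, observe that non-negativity of $(c_n)$ is preserved by two monotonicity operations: enlarging $A$ or shrinking $B$. Both amount to subtracting $x^s/s!$ from $F_{A,B}$ for some allowed $s$, and
\[
\frac{1}{F_{A,B} - \tfrac{x^s}{s!}} \;=\; G_{A,B}\,\sum_{k \geq 0}\left(\tfrac{x^s}{s!}\,G_{A,B}\right)^{\!k},
\]
which has non-negative coefficients whenever $G_{A,B}$ does. The conjecture's hypothesis forces $A \supseteq \{1,3,\ldots,2m-1\}$ and $B \supseteq \{2,4,\ldots,2m-2\}$ while forbidding $2m \in A \cup B$, so every admissible pair is obtained from the extremal pair
\[
A_{\min} = \{1,3,\ldots,2m-1\}, \qquad B_{\max} = \mathcal{E}\setminus\{2m\}
\]
by adding odd elements $\geq 2m+1$ to $A$ or removing even elements $\geq 2m+2$ from $B$, neither of which can create negative coefficients. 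Hence it suffices to prove the conjecture for $(A_{\min}, B_{\max})$, where $A_{\min} \cup B_{\max} = [2m-1]\cup\{2m+2,2m+4,\ldots\}$.

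In the extremal case, write $F_{\text{ext}} = F_0 + H$, where
\[
F_0 \;=\; 1 - \sum_{j=0}^{m-1}\frac{x^{2j+1}}{(2j+1)!} + \sum_{j=1}^{m-1}\frac{x^{2j}}{(2j)!}
\]
is the odd-ended piece (whose reciprocal has non-negative coefficients by Theorem \ref{thm-odd-ended}) and $H = \sum_{k\geq m+1}\frac{x^{2k}}{(2k)!}$ collects the ``large even'' block contributions. Then
\[
G_{\text{ext}} \;=\; \frac{1/F_0}{1 + H/F_0} \;=\; \sum_{t\geq 0}(-1)^t\,H^t\,(1/F_0)^{t+1},
\]
which combinatorially counts signed configurations $(S_0, B_1, S_1, B_2, \ldots, B_t, S_t)$: an ordered partition of $[n]$ into $2t+1$ labeled regions, alternating between ``small'' regions $S_i$ (each a valid ordered partition of its underlying set with all block sizes in $[2m-1]$, counted with the non-negative weight from Theorem \ref{thm-odd-ended}) and single ``large'' blocks $B_j$ of sizes in $\{2m+2,2m+4,\ldots\}$, with sign $(-1)^t$.

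The natural strategy is a sign-reversing involution on these configurations that toggles the leftmost large block, either deleting $B_1$ and canonically redistributing its elements into the union $S_0 \cup S_1$, or conversely extracting a distinguished even-sized chunk from $S_0$ to create a new leading $B_1$. The main obstacle is making the toggle canonical and genuinely involutive: one must unambiguously specify which even-sized subsets of a union of small blocks qualify as an extractable large block, and verify that the operation interacts cleanly with the (already intricate) involution of Theorem \ref{thm-odd-ended} governing the weights of the $S_i$. An alternative, potentially cleaner route is through the Run Theorem in Section \ref{sec-run-theorem}: one would try to interpret $c_n$ as a weighted count of permutations of $[n]$ with prescribed increasing-run lengths and show that the hypothesis ``initial interval ends at an odd number'' already suffices to make all weights non-negative, regardless of the structure of $A\cup B$ beyond the initial interval. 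I consider this second route the more promising one for completing the proof.
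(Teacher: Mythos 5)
The statement you are addressing is stated in the paper as a \emph{conjecture}, and your proposal does not close it either. Your reduction step is correct and is essentially identical to the paper's own reduction (Lemma \ref{lem-conjreduce} and Corollary \ref{cor-sufficientreduce}): subtracting terms $x^s/s!$ preserves non-negativity of the reciprocal, so it suffices to treat the extremal case $A\cup B=[2m-1]\cup\{2m+2,2m+4,\ldots\}$. But for that case you only sketch two strategies and complete neither, and both run into obstructions that the paper records. The Run Theorem route, which you single out as the more promising, provably cannot work as stated: by Corollary \ref{cor-run-theorem}, non-negativity of the $c_n$ would follow if the \emph{ordinary} reciprocal $\bigl(1-\sum_{a\in A}x^a+\sum_{b\in B}x^b\bigr)^{-1}$ had non-negative coefficients, but already for $m=1$, with $g(x)=1-x+x^4+x^6+\cdots$, one computes $w_0=w_1=w_2=w_3=1$, $w_4=0$, $w_5=-1$, so the run-weights are not all non-negative and the Run Theorem yields no sign information; the paper states this limitation explicitly at the end of Section \ref{sec-run-theorem}. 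In particular your hope that an odd-ended \emph{initial} interval alone forces non-negative weights ``regardless of the structure of $A\cup B$ beyond the initial interval'' is refuted by this computation.

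The involution route is likewise only a plan: the ``main obstacle'' you name---making the toggle of the leftmost large block canonical and involutive, compatibly with the weights coming from Theorem \ref{thm-odd-ended}---is exactly the missing argument, and the paper gives a concrete warning about such schemes: for $A\cup B=\{1,4,6,8,\ldots\}$ and the underlying permutation $12345$ of $[5]$, the signed count of compatible ordered set partitions is $-1$, so no sign-reversing involution that preserves the underlying permutation (as every involution in the paper does) can exist; any workable involution on your configurations $(S_0,B_1,S_1,\ldots)$ would have to evade this, and you do not indicate how. What the paper actually proves is your reduction plus the cases $m=1$ and $m=2$ by an analytic device (Theorems \ref{thm-analytic} and \ref{thm-analytic2}): one finds $g$ with $f(x)g(x)=1-p(x)$ and $p$ coefficientwise non-negative; the paper also reports that this device, in its natural form, already fails at $m=5$. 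So the conjecture remains open, and your proposal, whose first half agrees with the paper, does not supply the missing step.
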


We are able to reduce Conjecture \ref{conj-sufficient} to the particular case where $A \cup B = [2m-1] \cup \{2m+2,2m+4,\ldots\}$ for some positive integer $m$; see Corollary \ref{cor-sufficientreduce}.  We then use analytic arguments to prove Conjecture \ref{conj-sufficient} in the cases where $m=1$ and $m=2$, see Theorems \ref{thm-analytic} and \ref{thm-analytic2}.

These analytic techniques do not seem to support extending this argument much further, as we discuss in Section \ref{sec-necsuff} after the proofs. Further evidence in favor of Conjecture \ref{conj-sufficient} is provided by computation that shows that if $m \leq 250$ then $c_n \geq 0$ for $n \leq 2500$. 

Note that while the Run Theorem, Theorem \ref{thm-run-theorem}, can be used to prove Theorems \ref{thm-odd-ended} and \ref{thm-odd-endedstretched}, it cannot be used to resolve this conjecture.  See Section \ref{sec-run-theorem} for the details.

Our combinatorial proofs all rely on an involution on the ordered set partitions of $[n]$ that {\em fixes the underlying permutation} and associates a non-negative coefficient to each fixed permutation; see Section \ref{sec-proofideas} for an overview of this involution. We can in fact show that our method of proof has particular limitations that hinder its ability to prove the sufficient condition found in Conjecture \ref{conj-sufficient}.  See Section \ref{sec-necsuff} for the details.

\medskip

The rest of the paper is laid out as follows. In Section \ref{sec-proofideas}, we give a high-level overview of the algorithm which is used throughout the paper, and illustrate the algorithm on several explicit partitions. Then, in Section \ref{sec-pfofodd-ended}, we give the proof of Theorem \ref{thm-odd-ended} as well as the combinatorial interpretations of the coefficients $c_n$ of the reciprocal.  Section \ref{sec-stretched} extends the proof to the stretched setting, giving both the proof of Theorem \ref{thm-odd-endedstretched} and the corresponding combinatorial interpretations.  We end in Section \ref{sec-necsuff} with the proof of Theorem \ref{thm-necessary} along with the reasons why our algorithm is unable to prove Conjecture  \ref{conj-sufficient} in full. We then use an analytic argument to reduce Conjecture \ref{conj-sufficient} to a particular $A\cup B$ for each positive integer $m$, and use that reduction to analytically prove the conjecture for $m=1$ and $m=2$; we also give evidence that the natural adjustments to our analytic argument are unable to fully resolve Conjecture \ref{conj-sufficient} for all positive integers $m$.  We conclude with Section \ref{sec-run-theorem} in which we show how the Run Theorem in non-commutative symmetric function theory along with some algebraic manipulations can be used to provide alternate proofs to Theorems \ref{thm-odd-ended} and \ref{thm-odd-endedstretched}.

\section{Proof Ideas}\label{sec-proofideas}

In this section, we briefly sketch an algorithmically defined involution $i$, that is involved in proving Theorems \ref{thm-odd-ended} and \ref{thm-odd-endedstretched}.  We then illustrate how it works on some inputs.

Given a partition $P$ of $[n]$ whose elements in a block 
are listed in increasing order, the algorithm attempts to perform one of two operations:
\begin{itemize}
    \item if the block furthest to the right has a single element, it attempts to merge it with the one to its left (subject to some conditions), or 
    \item if the block furthest to the right has more than one element, it splits off its furthest right element into a new singleton block.  
\end{itemize}
If it cannot do either of these (e.g. because they will create blocks of forbidden sizes), it will move to the next block of $P$ to the left; occasionally it will need to also skip this next block too. It takes some care to ensure the involution is well-defined and prove that it has the properties we need.  We will do this in Section 3, but first we illustrate the involution operating on some particular inputs.

\begin{ex}
Suppose $A=\{1,3\}$, $B=\{2\}$, and $n=9$.  This means we have ordered partitions of the set $[9]$ into blocks, with the elements inside of a block written in increasing order.  Furthermore, the blocks can only contain $1$, $2$, or $3$ elements.

\begin{itemize}
    \item The partition $P=234/6/1/57/89$ of $[9]$ will map to $234/6/1/57/8/9$, and this latter partition will map back to the former partition $234/6/1/57/89$.  
    
    For future reference, we also note here that the underlying permutation for $P$ is $234615789$, and this is the same as the underlying partition for the image of $P$. This property will always hold for our algorithm.

    \item The partition $P=234/1/56/78/9$ will map to $234/1/56/789$, and this latter partition will map back to the former partition.

    \item The partition $P=234/1/56/79/8$ will skip the first singleton (as the entries of merging blocks $79$ and $8$ are not in order), and then map to $234/1/56/7/9/8$. This latter partition will again skip the first singleton (as the entries of merging blocks $9$ and $8$ are out of order) and then map back to the former partition. 

    \item The partition $P=134/28/567/9$ will skip the first singleton (since blocks of size $4$ are not allowed).  It will then also \emph{skip the block $567$} and move to considering block $28$.  
    
    The reason for this somewhat surprising second skip, briefly, is that if not, the partition $P$ will instead map to $134/28/56/7/9$, and this latter partition will \emph{not} map back to the former under this procedure (it will map to $134/28/56/79$ to be consistent with previously defined rules, and so our algorithm will not be an involution). Therefore, to make this algorithm an involution, we also skip the block $567$.
\end{itemize}
\end{ex}

We also note that being odd-ended means that any even-sized block will be able to either add or subtract a singleton and still have a block size in $A \cup B$, thus mapping elements of ${\mathcal P}^{\rm neg}_{n,A,B}$ to elements of ${\mathcal P}^{\rm pos}_{n,A,B}$. 

Finally, we briefly indicate how the procedure will change when considering the values $b >1$ and $r>2$. When we have $b>1$, we will view each individual number in $[n]$ as a group of size $b$, and when we have a value $r>2$ we will translate from the set $\{1,2,3,4,5,\ldots\}$ to the set $\{1,r,r+1,2r,2r+1,\ldots\}$.

\section{Proof of Theorem \ref{thm-odd-ended} and Combinatorial Interpretations}\label{sec-pfofodd-ended}

In this section, we prove Theorem \ref{thm-odd-ended} and provide the combinatorial interpretations of the coefficients of the reciprocal series.  We begin with the proof.

\subsection{Proof of Theorem \ref{thm-odd-ended}}
Suppose that $A \subseteq {\mathcal O}$ and $B \subseteq {\mathcal E}$ satisfy that $1 \in A$ and $A \cup B$ is odd-ended. 

For each non-negative integer $n$ we construct an injection from ${\mathcal P}^{\rm neg}_{n,A,B}$ into ${\mathcal P}^{\rm pos}_{n,A,B}$ 
by constructing inductively for each $n \geq 0$ a sign-reversing involution $i_n$ on ${\mathcal P}^{\rm pos}_{n,A,B} \cup {\mathcal P}^{\rm neg}_{n,A,B}$, all of whose fixed points lie in ${\mathcal P}^{\rm pos}_{n,A,B}$. By sign-reversing, we mean that if $i(P) \neq P$ then $i(P) \in  {\mathcal P}^{\rm pos}_{n,A,B}$ if $P \in {\mathcal P}^{\rm neg}_{n,A,B}$, and $i(P) \in  {\mathcal P}^{\rm neg}_{n,A,B}$ if $P \in {\mathcal P}^{\rm pos}_{n,A,B}$. 
Furthermore, we will show that the underlying permutation on $i_n(P)$ is the same as the underlying permutation on $P$ (i.e. the relative order of the elements of $[n]$ in the ordered partition is not changed by $i_n$). The inductive procedure can equivalently be thought of as an iterative algorithm on a given ordered partition. 
\medskip

\noindent {\bf Constructing $i_0$}: When $n=0$ there is a unique ordered partition, the empty partition, which is in ${\mathcal P}^{\rm pos}_{0, A,B}$ as it has zero blocks with sizes from $B$. The involution is the identity map, and the relative order condition is trivially satisfied.

\medskip

\noindent {\bf Constructing $i_1$}: When $n=1$ there is again a unique ordered partition, with one singleton block $\{1\}$, which is in ${\mathcal P}^{\rm pos}_{1,A,B}$, as it has zero blocks with sizes from $B$. The involution is again the identity map, and also the relative order condition is trivially satisfied. 

\medskip

\noindent {\bf Constructing $i_n$ for $n \geq 2$}: We take as our inductive hypothesis that for $m < n$, a sign-reversing involution $i_m$  on ${\mathcal P}^{\rm pos}_{m, A,B} \cup {\mathcal P}^{\rm neg}_{m, A,B}$, all of whose fixed points lie in ${\mathcal P}^{\rm pos}_{m, A,B}$, has been constructed with the property that for every $P \in {\mathcal P}^{\rm pos}_{m, A,B} \cup {\mathcal P}^{\rm neg}_{m,A,B}$, the permutation underlying $i_m(P)$ is the same as that underlying $P$. 

We first construct an explicit involution $i_n$  on ${\mathcal P}^{\rm pos}_{n, A,B} \cup {\mathcal P}^{\rm neg}_{n, A,B}$, that also satisfies the relative order condition.

Suppose that $P:=P_1/P_2/\cdots/P_\ell$ is an ordered partition of $[n]$ with $|P_i| \in A \cup B$ for $i=1,\ldots,\ell$, where the elements inside each $P_i$ are listed in increasing order.

\bigskip

\noindent \hrulefill

\noindent \textbf{ALGORITHM: Case $r=2$}

\begin{description}
\item[(1)] Suppose $|P_\ell|=1$.  Say block $P_\ell$ consists of the single number $k$: 
\begin{description}
\item[Case A] If $k$ is larger than the largest entry (the right-most entry) of block $P_{\ell-1}$, and if also $|P_{\ell-1}|+1 \in A\cup  B$, then do a {\em merge move}:  combine blocks $P_{\ell-1}$ and $P_\ell$, so $i_n(P) = P_1/P_2/\cdots/P_{\ell-1}P_\ell$.

\item[Case B] If $k$ is smaller than the largest entry in $P_{\ell-1}$, then do a {\em block skip}: fix $P_\ell$ and next consider $P_{\ell-1}$, so $i_n(P)=i_{n-1}(P_1/P_2/\cdots/P_{\ell-1})/P_\ell$.\footnote{Since $[n]\setminus\{k\}$ is not (necessarily) equal to $[n-1]$, what we mean here is: Let $P'=P_1/P_2/\cdots/P_{\ell-1}$ (an ordered partition of $[n]\setminus\{k\}$). Let $\varphi$ be the unique order-preserving map from $[n]\setminus\{k\}$ to $[n-1]$. This naturally induces a map $\varphi'$ from ordered partitions of $[n]\setminus\{k\}$ to ordered partitions of $[n-1]$. Then set  $i_n(P)=(\varphi')^{-1}(i_{n-1}(\varphi'(P')))/P_\ell$.}  
\item[Case C] If $k$ is larger than the largest entry in $P_{\ell-1}$, and if also $|P_{\ell-1}|+1 \notin A \cup B$, then do a {\em freeze skip}: fix {\em both} $P_\ell$ and $P_{\ell-1}$, and next consider $P_{\ell-2}$, so $i_n(P)=i_{n-1-|P_{\ell-1}|}(P_1/P_2/\cdots/P_{\ell-2})/P_{\ell-1}/P_\ell$.\footnote{To make this precise requires similar machinations as those required for Case B.}
\end{description}
\item[(2)] Suppose $|P_\ell|>1$, with $k$ the largest element of $P_\ell$:
\begin{description}
\item[Case D] If $|P_\ell|-1 \in A \cup B$, then make a {\em split move}: replace the block $P_\ell$ with two blocks, $P_\ell-k$ and $k$, in that order, so $i_n(P)=P_1/P_2/\cdots/P_{\ell-1}/P_\ell-k/k$. 
\item[Case E] If $|P_{\ell}|-1 \notin A\cup B$, then do a block skip, so $i_n(P) = i_{n-|P_\ell|}(P_1/P_2/\cdots/P_{\ell-1})/P_\ell$\footnote{To make this precise requires similar machinations as those required for Cases B and C.}. 
\end{description}

\end{description}  
\noindent \hrulefill

\begin{ex}
We give examples of each case on the set $[9]$ when considering the initial block:
\begin{description}
\item[Case A:] With $A=\{1,3\}$ and $B=\{2\}$, the partition $P= \cdots /47/8$ would do a merge move producing $i_9(P) = \cdots /478$.

\item[Case B:] With $A=\{1,3\}$ and $B=\{2\}$, the partition $P=\cdots /48/7$ would do a block skip on $7$ and will move to consider block $48$. Therefore $i_9(P) = \cdots /4/8/7$.  

\item[Case C:] With $A=\{1,3\}$ and $B=\{2\}$, the partition $P=\cdots/28/467/9$ would do a freeze skip, and next consider the block $28$. So $i_9(P) = \cdots /2/8/467/9$. 

Note also here that $P_1=\cdots/458/7$ would do a block skip and not a freeze skip, as $7$ is smaller than $8$. In this case $i_9(P_1) = \cdots/45/8/7$. 

\item[Case D:] With $A=\{1,3,5\}$ and $B=\{4\}$, the partition $P=\cdots/3/2689$ would make a split move producing $i_9(P) = \cdots/3/268/9$.

\item[Case E:] With $A=\{1,3,5\}$ and $B=\{4\}$, the partition $P=\cdots/4/269$ would do a block skip on $269$ (note $2 \notin B$) and next consider the singleton block $4$, and use induction on $\cdots/4$.
\end{description}
\end{ex}

\begin{remark}
The motivation for the freeze skip above is to make $i_n$ an involution.  To illustrate this, suppose $A=\{1,3\}$, $B=\{2\}$, the partition $P=134/28/567/9$, and we did not perform a freeze skip but instead a block skip on the first block.  Then $i_9(P) = 134/28/56/7/9$ (as the block containing $9$ is skipped, and a split move is performed on $567$), and therefore $i_9(i_9(P)) = 134/28/56/79 \neq P$.
\end{remark}

We now prove that $i_n$ has the properties required. A few of them are immediate. First, the map $i_n$ is indeed a map from ${\mathcal P}^{\rm pos}_{n, A,B} \cup {\mathcal P}^{\rm neg}_{n, A,B}$ to itself, as all block sizes of $i_n(P)$ are, by construction, in $A\cup B$.   Second, it preserves underlying permutations, as only merges and splits of blocks occur. 

We next show that $i_n$ is an involution, and we do this by induction on $n$ with base cases $n=0, 1$ trivial. For $n \geq 2$, we now show that whichever of Cases A, B, C, D or E the ordered partition $P=P_1/\cdots/P_\ell$ falls into, we have  $i_n(i_n(P))=P$. We will use the notation $P':=i_n(P)$ in each case.
\begin{description}
\item[Case A] Here $P'=P_1/P_2/\cdots/P_{\ell-1}P_\ell$. The last block of $P'$ has size greater than $1$, and $|P_{\ell-1}P_\ell|-1 = |P_{\ell-1}| \in A\cup B$. So $P'$ falls into Case D, and so $i_n(P')=P$. 
\item[Case B] Here $P'=i_{n-1}(P_1/P_2/\cdots/P_{\ell-1})/P_\ell$. The last block in this ordered partition is a singleton, say with entry $k$. This number $k$ is smaller than the right-most entry of $P_{\ell-1}$, so 
$P'$ falls into Case B (as the underlying permutation for $P$ and $P'$ is the same), and
$$i_n(P')=i_{n-1}(i_{n-1}(P_1/P_2/\cdots/P_{\ell-1}))/P_\ell.$$ 
Since $i_{n-1}$ is an involution (by induction), $i_n(P')=P$.
\item[Case C] Here $P'=i_{n-1-|P_{\ell-1}|}(P_1/P_2/\cdots/P_{\ell-2})/P_{\ell-1}/P_\ell$, $P_\ell$ is a singleton block with entry $k$ and $k$ is larger than the largest entry in $P_{\ell-1}$, and also $|P_{\ell-1}|+1 \notin A \cup B$. So $P'$ falls into Case C, and therefore  
$$i_n(P')=i_{n-1-|P_{\ell-1}|}(i_{n-1-|P_{\ell-1}|}(P_1/P_2/\cdots/P_{\ell-2}))/P_{\ell-1}/P_\ell.$$
Since $i_{n-1-|P_{\ell-1}|}$ is an involution (by induction), $i_n(P')=P$.
\item[Case D] Here $P'=P_1/P_2/\cdots/P_\ell-k/k$. The last block of $P'$ has size $1$, and the entry in that block, $k$, is greater than all entries in the penultimate block of $P'$. 
Also, the size of the penultimate block of $P'$ is $|P_\ell|-1$ and $(|P_{\ell}|-1)+1 \in A \cup B$. So $P'$ falls into Case A, and $i_n(P')=P$.
\item[Case E] Here $P'=i_{n-|P_\ell|}(P_1/P_2/\cdots/P_{\ell-1})/P_\ell$. The last block of $P'$ has size greater than $1$ and $|P_{\ell}|-1 \notin A \cup B$.  So $P'$ falls into Case E, and 
$$
i_n(P')=i_{n-|P_\ell|}(i_{n-|P_\ell|}(P_1/P_2/\cdots/P_{\ell-1}))/P_\ell.
$$
Since $i_{n-|P_\ell|}$ is an involution (by induction), $i_n(P')=P$.
\end{description}

\noindent We have thus shown that $i_n$ is an involution. 

\bigskip

To show that $i_n$ is sign-reversing, we will show that if $i_n(P) \neq P$, then $i_n(P)$ and $P$ differ by one in the number of blocks with even size.  But this follows from the construction: either blocks of size $t-1$ and $1$ merge to a single block of size $t$, or a block of size $t>1$ splits to two with size $t-1$ and $1$. So if $i_n(P) \neq P$, then exactly one of $i_n(P)$ and $P$ is in ${\mathcal P}^{\rm pos}_{n,A,B}$.

Finally we argue that for $n \geq 1$, if any block size of $P$ is in $B$, then $i_n(P)\neq P$. As a consequence, the only fixed ordered permutations have block sizes entirely in $A$.  In fact, we show something stronger, which will lead to our combinatorial interpretations. 
We note that being odd-ended implies that all top and bottom elements are in $A$. 
\begin{lemma}\label{lem-topbottomelts}
Let $n \geq 1$. With the involution $i_n$ defined above, if $i_n(P) = P$, then all block sizes of $P$ are $1$ or a top element or a bottom element that is not equal to $1$. 

Furthermore, the necessary and sufficient conditions on the order of the block sizes of $P$ are that, (i), the top elements that are not also bottom elements must be part of a freeze skip pair, and, (ii), the bottom elements that are not also top elements must not have a singleton to the right (whose elements are collectively in increasing order) that is not in a freeze skip pair.
\end{lemma}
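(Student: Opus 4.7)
The plan is to characterize fixed points of $i_n$ by analyzing, stage by stage, the recursive calls made by the algorithm. A partition $P$ is a fixed point precisely when every recursive stage terminates in a ``skip'' case (Case B, C, or E) rather than a modifying case (Case A or D). I would proceed by strong induction on the number of blocks of $P$, combined with a direct case analysis of the obstructions forced by the odd-ended hypothesis on $A \cup B$.

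For the first claim about the allowable block sizes, I examine what each skip case demands of the blocks it touches. Case B applies to a singleton; Case C applies to a singleton whose predecessor $P_{\ell-1}$ satisfies $|P_{\ell-1}|+1 \notin A \cup B$, which forces $|P_{\ell-1}|$ to be a top element of some maximal interval (and odd by odd-endedness, hence in $A$); Case E applies to a block $P_\ell$ with $|P_\ell|>1$ and $|P_\ell|-1 \notin A \cup B$, which forces $|P_\ell|$ to be a bottom element $\neq 1$ (again odd, hence in $A$). Combining these, every block of $P$ must have size $1$, a top element, or a bottom element $\neq 1$.

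For the structural conditions (i) and (ii), I fix each such block $X$ and determine which skip case could process it. If $|X| = t$ with $t$ a top element that is not a bottom element, then $t - 1 \in A \cup B$ (its maximal interval has more than one element), so Case E is unavailable; since $t > 1$, Cases B and C applied to $X$ itself are also unavailable; hence $X$ can only be skipped as the left half of a Case C pair, i.e., in a freeze skip pair with a singleton immediately to its right, which is exactly condition (i). If $|X| = b$ with $b$ a bottom-but-not-top element and $b \neq 1$, then $b - 1 \notin A \cup B$ and $b + 1 \in A \cup B$, so only Case E can process $X$, which demands that $X$ actually become the rightmost block at some recursive stage. If a singleton $s$ lying to the right of $X$ ends up adjacent to $X$ at its processing stage with entry larger than the maximum of $X$ (the ``collectively increasing'' condition), then since $|X|+1 \in A \cup B$ we fall into Case A, a merge, contradicting fixedness; the only escape is for $s$ to have been consumed earlier through a Case C freeze skip pair involving a top-element block nested between $X$ and $s$, giving condition (ii).

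Sufficiency would follow by a parallel induction: if $P$ satisfies the block-size condition together with (i) and (ii), then at each recursive stage the rightmost block or freeze skip pair must fall into Case B, C, or E, and the remaining prefix inherits the hypotheses, so the induction closes. I expect the main obstacle to be the careful bookkeeping for nested freeze skip pairs, in particular verifying that each singleton requiring a Case C home genuinely has a suitable top-element partner immediately to its left at the correct stage of the recursion; I would resolve this by strict induction on the number of blocks, peeling off the rightmost skip case (or freeze skip pair) at each step and invoking the inductive hypothesis on the remaining prefix.
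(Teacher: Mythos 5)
Your overall strategy is the same as the paper's: a fixed point is exactly a partition in which every stage of the recursion lands in a skip case (B, C, or E), the block-size claim follows by induction because skips only ever pass over blocks of size $1$, a top element, or a bottom element, and the order conditions are read off as ``(i) prevents Case D, (ii) prevents Case A.'' Your treatment of the first claim and of condition (i) is essentially the paper's argument.

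There is, however, a concrete flaw in your necessity argument for condition (ii), specifically in how you handle the exemption ``that is not in a freeze skip pair.'' The Case A threat to a bottom-but-not-top block $X$ can only come from the singleton \emph{immediately} to its right (Case A merges $P_\ell$ into $P_{\ell-1}$, and blocks never change adjacency during the recursion --- they are only peeled off from the right end --- so nothing ``ends up adjacent'' that was not adjacent already). Consequently your proposed escape, a Case C pair ``involving a top-element block nested between $X$ and $s$,'' cannot occur in the relevant configuration: if such a block sat between $X$ and $s$, then $s$ would not be adjacent to $X$ and Case A would pose no threat in the first place. The actual exemption is that the adjacent singleton $s$ is itself the \emph{frozen} (left) member of a freeze skip pair with a second singleton to its right, which is possible only when $2 \notin A \cup B$ (so that $1$ is a top element); this is exactly the situation the clause in (ii) is designed for, e.g.\ $A\cup B=\{1,5,6,7\}$ with a block of size $5$ followed by two increasing singletons. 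Relatedly, your analysis of (ii) only treats bottom-not-top sizes $b\neq 1$, but when $2\in A\cup B$ the size-$1$ blocks are themselves bottom-not-top elements and the same merge threat (two adjacent increasing singletons) must be excluded by (ii). With the exemption mechanism corrected and the $b=1$ case included, your induction scheme for both necessity and sufficiency goes through as in the paper.
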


We note that $1$ is a bottom element, but we separate it out in Lemma \ref{lem-topbottomelts} to emphasize its importance in the algorithm.  To better understand condition (ii), suppose that a bottom element that is not a top element has a singleton to the right whose elements are in increasing order.  If that singleton is in a freeze-skip pair, then it must be the block that is ``frozen'' and so has a second singleton to its right (whose elements are collectively in increasing order), and therefore we must have $2 \notin A \cup B$.  So, if $2 \in A \cup B$, then (ii) simply says that if you have a bottom element that is not a top element, then it does not have a singleton to the right (whose elements are collectively in increasing order).  Whereas if $2 \notin A \cup B$, then the second condition inductively says that if you have a bottom element that is not a top element, then it does not have an odd number of singletons to the right (whose elements are collectively all in increasing order).

\begin{proof}[Proof of Lemma \ref{lem-topbottomelts}]
To prove the first statement, we proceed by induction on $n$, with the result trivial for base case $n=1$. Assume that the ordered partition $P=P_1/\cdots/P_\ell$ has at least one block size not equal to $1$ or a top element or a bottom element.  We argue that $i_n(P) \neq P$.  This is obvious in Cases A and D as here we immediately have $i_n(P)\neq P$.  In Case B, we skip a block with size $1$, in Case C we skip blocks of size $1$ and a top element, and in Case E we skip a block with size being a bottom element.  In each of these three Cases, we are left with a partition that still has at least one block size not equal to $1$ or a top element or a bottom element, and so by induction we obtain $i_n(P) \neq P$.

The second statement is clear from the algorithm, as it is describing the properties needed so that Cases A and D never occur when considering a fixed block, and so $P$ is fixed.  Note that condition (ii) ensures that case A can't happen and (i) ensures that case D can't happen.
\end{proof}

\begin{ex}
Suppose that $A = \{1,3,5\}$ and $B=\{4\}$, so $A \cup B = \{1,3,4,5\}$.  Then the partition $1/234/5/6$ of $[6]$ is a fixed point of $i_6$.  However, the partition $123/4/5/6$ is not a fixed point of $i_6$, since $i_6(123/4/5/6) = 1234/5/6$.

This illustrates that partitions that merely satisfy that the block sizes are $1$ or a top element or a bottom element need not be fixed by the involution.
\end{ex}

\medskip

This completes the proof of the existence of a sign-reversing involution $i_n$ on ${\mathcal P}^{\rm pos}_{n,A,B} \cup {\mathcal P}^{\rm neg}_{n,A,B}$ whose fixed points all lie in ${\mathcal P}^{\rm pos}_{n,A,B}$. In fact, we have proved that all of the fixed points are elements of ${\mathcal P}^{\rm pos}_{n,A,B}$ with all block sizes $1$ or top elements or bottom elements and that the orderings of these blocks must satisfy certain conditions.

\subsection{General Combinatorial Interpretations}

With the terminology from the proof and the characterization of the fixed points of $i_n$, we have the following as a Corollary of the proof of Theorem \ref{thm-odd-ended} and Lemma \ref{lem-topbottomelts}. 

\begin{defn}\label{def-i_nfixed}
Suppose that $A \subseteq \mathcal{O}$ and $B \subseteq \mathcal{E}$ with $1 \in A$. Say that an ordered set partition of $[n]$ is {$A,B$-good} if:
\begin{itemize}
    \item all parts are either size $1$ or a size that is an endpoint of the maximal intervals of $A \cup B$,
    \item writing the elements of a block in increasing order, within an increasing run in the underlying permutation the following holds:
    \begin{itemize}
        \item all blocks with size equal to a top element that is not a bottom element is part of a freeze skip pair, and
        \item all blocks with size equal to a bottom element that are not also top elements do not have a singleton immediately to the right that is not in a freeze skip pair.
    \end{itemize}
\end{itemize}
\end{defn}

In light of Lemma \ref{lem-topbottomelts}, when $A \cup B$ is odd-ended, the ordered set partitions of $[n]$ that are $A,B$-good are exactly those which the involution $i_n$ leaves fixed.

\begin{cor}\label{cor-partitioninterp}
Suppose that $A \subseteq \mathcal{O}$ and $B \subseteq \mathcal{E}$ with $A \cup B$ odd-ended and $1 \in A$.  
Define $(c_n)_{n \geq 0}$ via 
\[
\left(1-\sum_{a \in A} \frac{x^a}{a!} + \sum_{b \in B} \frac{x^b}{b!} \right)^{-1} = \sum_{n \geq 0} c_n \frac{x^n}{n!}.
\]
Then the values $c_n$ are all non-negative integers, and moreover have a natural combinatorial interpretation as the number of $A,B$-good ordered set partitions of $[n]$.
\end{cor}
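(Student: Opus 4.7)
The plan is to deduce Corollary \ref{cor-partitioninterp} essentially as a bookkeeping consequence of the sign-reversing involution $i_n$ constructed in the proof of Theorem \ref{thm-odd-ended}, together with the structural characterization of its fixed points given by Lemma \ref{lem-topbottomelts}.

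First I would recall the elementary identity derived in the introduction via expanding $1/F_{A,B}(x)$ as a geometric series in $\sum_{a\in A}\frac{x^a}{a!} - \sum_{b\in B}\frac{x^b}{b!}$, which gives
\[
c_n = |\mathcal{P}^{\rm pos}_{n,A,B}| - |\mathcal{P}^{\rm neg}_{n,A,B}|.
\]
The involution $i_n$ pairs every partition it moves with one of the opposite sign (since by construction a merge or a split changes the parity of the number of blocks of even size by exactly one), so those paired contributions cancel in the above difference. What remains is
\[
c_n = |\mathrm{Fix}(i_n)|,
\]
and since $\mathrm{Fix}(i_n) \subseteq \mathcal{P}^{\rm pos}_{n,A,B}$, this already yields the non-negativity and integrality parts of the statement.

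The second step is to identify $\mathrm{Fix}(i_n)$ combinatorially. This is exactly the content of Lemma \ref{lem-topbottomelts}: a partition is fixed by $i_n$ precisely when (i) every block has size $1$, a top element, or a bottom element of a maximal interval of $A \cup B$, and (ii) the orderings of blocks within each increasing run of the underlying permutation satisfy the freeze-skip-pair and no-trailing-singleton conditions. Comparing clause by clause with Definition \ref{def-i_nfixed}, this is exactly the definition of an $A,B$-good ordered set partition, and the corollary follows.

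The main obstacle I expect is not conceptual but is a careful translation between two vocabularies: Lemma \ref{lem-topbottomelts} is phrased in the algorithmic language of the proof (which case of the algorithm applies to the rightmost block, whether a block is part of a freeze-skip pair, whether entries of adjacent blocks are ``collectively in increasing order''), while Definition \ref{def-i_nfixed} is phrased in terms of increasing runs of the underlying permutation and endpoints of maximal intervals of $A \cup B$. One must verify that ``singleton immediately to the right within an increasing run'' is precisely the trigger for Case A, and that the freeze-skip condition $|P_{\ell-1}|+1 \notin A \cup B$ holds exactly when $|P_{\ell-1}|$ is a top element that is not also a bottom element (this is where the odd-ended hypothesis on $A \cup B$ is used). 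Once these equivalences are confirmed, the remainder of the argument is a direct citation of the preceding results.
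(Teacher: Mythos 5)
Your proposal is correct and is essentially the paper's own argument: the corollary is stated there as an immediate consequence of the sign-reversing involution $i_n$ (which gives $c_n = |\mathrm{Fix}(i_n)| \geq 0$ via the cancellation in $c_n = |\mathcal{P}^{\rm pos}_{n,A,B}| - |\mathcal{P}^{\rm neg}_{n,A,B}|$) together with Lemma \ref{lem-topbottomelts}, whose fixed-point characterization is exactly Definition \ref{def-i_nfixed}. The translation issues you flag (Case A versus trailing singletons, and the freeze-skip condition versus top elements that are not bottom elements, using odd-endedness) are precisely what the lemma and the remarks following it settle, so no further work is needed.
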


Instead of looking at ordered set partitions, we can also imagine the underlying permutation being fixed, with a particular permutation being counted multiple times based on the number of ways of partitioning the maximal increasing runs into allowable blocks with an $A,B$-good ordered set partition.

\begin{ex}
Let $A \cup B = \{1,3\}$. We illustrate the difference between fixing a permutation and fixing an ordered set partition in our combinatorial interpretation.
\begin{enumerate}
    \item[(a)] Fixing a permutation and counting the ordered set partitions that could have that permutation underlying them: 
    
    Suppose we consider the permutation 456123 of $[6]$. This permutation is counted in four ordered set partitions: as $456/123$, $4/5/6/123$, $456/1/2/3$, or $4/5/6/1/2/3$.

    \item[(b)]Fixing the ordered set partition and counting the underlying permutations that give that set partition:
    
    Suppose that there are two blocks with size $3$ in an ordered set partition of $[6]$. By choosing the three elements to be in the first block, there are $\binom{6}{3} = 20$  permutations that can have two blocks with size $3$. Furthermore, there are $4 \binom{6}{3}\cdot 3! = 120$ that have one block of size three and three singleton blocks.  Indeed, there are 4 places to start the block of size 3, and the binomial coefficient and factorial count the number of ways to distribute 3 elements to the block of size three and the singleton blocks.
    Lastly, there are $6!$ composed of all singleton blocks.  Note that for $A \cup B = \{1,3\}$, we indeed have $c_6=1220$, which is computed as $\binom{6}{3} + 4 \cdot \binom{6}{3} \cdot 3! + 6!$.
\end{enumerate}
\end{ex}

When viewing the ordered set partitions that are fixed by $i_n$ as grouped together based on their underlying permutation, we arrive at the following definition. 

\begin{defn}
Suppose that $A \subseteq \mathcal{O}$ and $B \subseteq \mathcal{E}$ with $1 \in A$. Let $\ell \in [n]$.  The {\em weight $w_\ell$} is defined as the number of ordered set partitions of $\ell$ ordered elements so that:
\begin{itemize}
    \item all blocks are either size $1$ or a size that is an endpoint of the maximal intervals,
    \item all blocks with size equal to a top element that is not a bottom element is part of a freeze skip pair, and
    \item all blocks with size equal to a bottom element that is not a top element do not have a singleton immediately to the right that is not in a freeze skip pair.
\end{itemize}
\end{defn}

\begin{defn}
For a permutation $\sigma$ of $[n]$, define the {\em weight of $\sigma$}, denoted $w_\sigma$, as the product of the weights of the lengths of all maximal increasing runs of $\sigma$.
\end{defn}

\begin{lemma} \label{lem-w_sigma}
Given a permutation $\sigma$ of $[n]$, the number of $A,B$-good partitions that could have $\sigma$ underlying them is $w_\sigma$.
\end{lemma}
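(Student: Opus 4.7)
The plan is to show that $A,B$-good ordered set partitions $P$ of $[n]$ with underlying permutation $\sigma$ are in bijection with tuples of $A,B$-good arrangements, one per maximal increasing run of $\sigma$, and that the size of this set factorizes exactly as $w_\sigma$.

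First I would make precise the interaction between blocks of $P$ and maximal increasing runs of $\sigma$. Since each block of $P$ has its elements written in increasing order, and the underlying permutation is the concatenation of the blocks, a block of $P$ cannot contain a descent of $\sigma$. Consequently, every block of $P$ lies entirely inside one maximal increasing run of $\sigma$. Conversely, two consecutive blocks $P_i, P_{i+1}$ of $P$ belong to the same maximal increasing run of $\sigma$ if and only if the last element of $P_i$ is less than the first element of $P_{i+1}$. Thus for each maximal increasing run $R$ of $\sigma$ of length $\ell$, the blocks of $P$ that use elements of $R$ form a contiguous segment of $P$ whose block sizes compose $\ell$, and these segments for different runs simply concatenate to form all of $P$.

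Next I would verify that the $A,B$-good condition decomposes run by run. The defining conditions in Definition~\ref{def-i_nfixed} (allowed block sizes, the freeze-skip-pair requirement on top-not-bottom blocks, and the no-singleton-immediately-to-the-right requirement on bottom-not-top blocks) are explicitly qualified by "within an increasing run in the underlying permutation." Hence $P$ is $A,B$-good if and only if each run-segment of $P$, viewed on its own, is an $A,B$-good ordered set partition of $\ell$ linearly-ordered elements, where $\ell$ is the length of the corresponding maximal increasing run. Here I must check the mild subtlety that the freeze-skip-pair condition, which looks at a block together with its right-neighbour, never requires the neighbour to live in the next run: the neighbour in a freeze-skip pair is a singleton that sits strictly to the right of a top element within the same run, so the condition is genuinely internal to each run.

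Combining the two previous steps, the number of $A,B$-good ordered set partitions with underlying permutation $\sigma$ factors as the product, taken over the maximal increasing runs of $\sigma$ of lengths $\ell_1, \ell_2, \ldots$, of the number of $A,B$-good arrangements of each run. By definition this last quantity is $w_{\ell_j}$, so the product is $\prod_j w_{\ell_j} = w_\sigma$, which is the desired identity. The main obstacle, such as it is, is the careful verification in the second step that none of the $A,B$-good constraints straddles a descent of $\sigma$; once that is nailed down, the rest is bookkeeping.
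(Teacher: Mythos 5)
Your proposal is correct and follows essentially the same route as the paper: each block, and each freeze-skip pair, has its elements in increasing order and hence lies entirely within a single maximal increasing run, so the $A,B$-good conditions decompose run by run and the count factorizes as $w_{\ell_1}\cdots w_{\ell_k}=w_\sigma$. Your extra care in checking that the freeze-skip condition does not straddle a descent is exactly the point the paper makes, just spelled out in slightly more detail.
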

\begin{proof}
    Suppose first that $\sigma$ is a run, i.e. $\sigma=12 \ldots n$.  By Definition \ref{def-i_nfixed}, $w_n$ is the number of $A,B$-good partitions that could have $\sigma$ underlying them. Now suppose that $\sigma$ has runs of lengths $l_1, l_2, \ldots, l_k$. Consider an $A,B$-good partition with $\sigma$ underlying it. Since a block must have its elements in increasing order, each block must lie entirely in some run. Similarly the elements in both blocks of a freeze-skip pair must come in increasing order and thus also lie entirely in some run.  Thus the number of $A,B$-good partitions that could have $\sigma$ underlying them is $w_{l_1} w_{l_2} \cdots w_{l_k} = w_\sigma$.
\end{proof}
\begin{ex}
If we let $A \cup B = \{1,3\}$, then the permutation $\sigma = 4567123$ has a maximal increasing run of length $3$ that satisfies $w_3=2$ and a maximal increasing run of length $4$ that satisfies $w_4$=3, and so $w_\sigma=2\cdot 3 = 6$.
\end{ex}

\begin{cor}\label{cor-perminterp}
Suppose that $A \subseteq \mathcal{O}$ and $B \subseteq \mathcal{E}$ with $A \cup B$ odd-ended and $1 \in A$.  
Define $(c_n)_{n \geq 0}$ via 
\[
\left(1-\sum_{a \in A} \frac{x^a}{a!} + \sum_{b \in B} \frac{x^b}{b!} \right)^{-1} = \sum_{n \geq 0} c_n \frac{x^n}{n!}.
\]
Then the values $c_n$ are all non-negative integers, and moreover have a natural combinatorial interpretation as  
\[
c_n = \sum_\sigma w_\sigma,
\]
where the sum is over all permutations $\sigma$ of $[n]$.

\end{cor}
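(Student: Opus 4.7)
The plan is to derive Corollary \ref{cor-perminterp} as an almost immediate consequence of the two results that precede it in this subsection: Corollary \ref{cor-partitioninterp} and Lemma \ref{lem-w_sigma}. By Corollary \ref{cor-partitioninterp}, $c_n$ equals the number of $A,B$-good ordered set partitions of $[n]$. Every such ordered set partition $P$ determines an underlying permutation $\sigma(P)$ of $[n]$, obtained by writing the blocks (each with elements in increasing order) in order. I would therefore fibre the set of $A,B$-good ordered set partitions of $[n]$ over the symmetric group $S_n$ according to the value of $\sigma(P)$, giving
\[
c_n \;=\; \sum_{\sigma \in S_n} \#\{\,P : P \text{ is } A,B\text{-good},\ \sigma(P)=\sigma\,\}.
\]

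The substantive step is to identify the inner count with $w_\sigma$. This is precisely Lemma \ref{lem-w_sigma}, and the key structural observation behind that lemma is that any block of an $A,B$-good ordered set partition (and also any freeze-skip pair) must consist of entries that are in increasing order in $\sigma$, hence must lie entirely within a single maximal increasing run of $\sigma$. Thus the choices of block decomposition and freeze-skip pairings decouple over the distinct maximal increasing runs of $\sigma$, yielding a product structure. Within each run of length $\ell$, the number of admissible decompositions is, by Definition \ref{def-i_nfixed} and the definition of $w_\ell$, exactly $w_\ell$; multiplying across runs gives $w_\sigma$.

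Combining these two observations gives $c_n = \sum_{\sigma \in S_n} w_\sigma$, as desired. I do not expect any real obstacle here: both Corollary \ref{cor-partitioninterp} and Lemma \ref{lem-w_sigma} have already been established, and the proof is genuinely a two-line bookkeeping argument that regroups the sum indexing $c_n$ by underlying permutation. The only point worth stating explicitly in the write-up is the independence of the choices across distinct maximal increasing runs, which is the single fact making the multiplicative definition of $w_\sigma$ consistent with the count.
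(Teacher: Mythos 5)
Your proposal is correct and matches the paper's own proof: both deduce the corollary by combining Corollary \ref{cor-partitioninterp} (which identifies $c_n$ with the count of $A,B$-good ordered set partitions) with Lemma \ref{lem-w_sigma} (which says the number of such partitions over a fixed underlying permutation $\sigma$ is $w_\sigma$), then summing over $\sigma$. The decoupling across maximal increasing runs that you highlight is exactly the content of the paper's proof of Lemma \ref{lem-w_sigma}, so nothing further is needed.
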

\begin{proof}
By Corollary \ref{cor-partitioninterp}, $c_n$ is the number of $A,B$-good set partitions.  By Lemma \ref{lem-w_sigma}, if $\sigma$ is a permutation of $[n]$, $w_\sigma$ is the number of $A,B$-good partitions that could have the permutation $\sigma$ underlying them.  
\end{proof}

See Section \ref{sec-run-theorem} for alternate combinatorial interpretations that are equivalent to the results in Corollaries \ref{cor-partitioninterp} and \ref{cor-perminterp} that follow from the Run Theorem.

\subsection{Recovering Gessel's Interpretation Involving Increasing Runs}
In the special case $A \cup B = [2m-1]$ for some positive integer $m$, we now show how we can recover Gessel's interpretation that $c_n$ is the number of permutations of $[n]$ whose maximal increasing runs all have length congruent to $0$ or $1$ mod $2m$.  

When $m=1$, the reciprocal is $(1-x)^{-1} = \sum_{n=0}^\infty x^n$, which has exponential generating function coefficient $n!$.  Here Gessel's characterization is trivial as every integer is congruent to $0$ or $1$ mod $2$, and so all $n!$ permutations are counted and $c_n=n!$.

For $m>1$, the element $1$ is a bottom element that is not a top element, and the element $2m-1$ is a top element that is not a bottom element.  Therefore, by Corollary \ref{cor-perminterp} and Definition \ref{def-i_nfixed}, an increasing run is made up from blocks with sizes $1$ and $2m-1$; further, the blocks of size $2m-1$ must have a block of size $1$ to the right, and there are no consecutive blocks of size $1$.

It follows that the the maximal increasing runs in the permutation must have the blocks of the ordered set partition appear in blocks that occur in freeze skip pairs, with the extra possibility of one more singleton block on the left of these pairs.  Since freeze skip pairs have $2m$ elements in them, if there are $j$ freeze skip pairs there are either $2mj$ elements in the increasing run (if there is no singleton block to the left) 
or $2mj+1$ elements in the increasing run (if there is a singleton block to the left). 
This shows that all increasing runs must have length $0$ or $1$ mod $2m$.
Finally, every permutation whose increasing runs have length $0$ or $1$ mod $2m$ has a unique associated ordered set partition $P$ where the runs consist of freeze skip pairs (with a possible singleton block on the left). Therefore the weight of each such maximal increasing run length is $1$, and so the weight of those permutations where all run lengths are $0$ or $1$ mod $2m$ is $1$ (with any other permutation having weight $0$).

\begin{ex}
With $A=\{1,3\}$ and $B=\{2\}$ we have $m=2$.  We list the some permutations on $9$ elements with maximal increasing runs of size $0$ or $1$ mod $4$, along with the corresponding partitions fixed by $i_9$ (with increasing runs underlined).

\[
\begin{array}{c|c|c}
\text{Permutation} & \text{Permutation with Runs Marked} & \text{Partition}\\
\hline
&&\\
987654321 & \underline{9}\,|\,\underline{8}\,|\,\underline{7}\,|\,\underline{6}\,|\,\underline{5}\,|\,\underline{4}\,|\,\underline{3}\,|\,\underline{2}\,|\,\underline{1} & \underline{9_{_{}}}/\underline{8_{_{}}}/\underline{7_{_{}}}/\underline{6_{_{}}}/\underline{5_{_{}}}/\underline{4_{_{}}}/\underline{3_{_{}}}/\underline{2_{_{}}}/\underline{1_{_{}}}\\
&&\\
234615789 & \underline{2346}\,|\,\underline{15789} & \underline{234/6}/\underline{1/578/9}\\
&&\\
721689543 & \underline{7}\,|\,\underline{2}\,|\,\underline{1689}\,|\,\underline{5}\,|\,\underline{4}\,|\,\underline{3} & \underline{7_{_{}}}/\underline{2_{_{}}}/\underline{168/9}/\underline{5_{_{}}}/\underline{4_{_{}}}/\underline{3_{_{}}}\\
&&\\
712689543 & \underline{7}\,|\,\underline{12689}\,|\,\underline{5}\,|\,\underline{4}\,|\,\underline{3} & \underline{7_{_{}}}/\underline{1/268/9}/\underline{5_{_{}}}/\underline{4_{_{}}}/\underline{3_{_{}}}\\
&&\\
\end{array}
\]
\end{ex}

Thus, our combinatorial interpretation for the special case when $A\cup B=[2m-1]$ is exactly as in \cite{Gessel}.

When $A \cup B = \{1,2,3,\ldots\}$, the reciprocal is $e^x$ which has exponential generating function coefficients all equal to $1$.  It is immediate from Corollary \ref{cor-perminterp} and Definition \ref{def-i_nfixed} that, since $1$ is the only endpoint of a maximal interval, that the coefficients $c_n$ count the decreasing permutation that uses only singleton blocks, and so $c_n=1$. 

These cover the examples 
containing a $1$ and only a top element, and those containing only a $1$ and no other top or bottom elements.  

\begin{ex}

What if we had $1$ and only a second bottom element, such as $A \cup B = \{1, 5,6,7,8,\ldots\}$? Then the maximal increasing runs have blocks with size $1$ and $5$ only.  Furthermore, no size $5$ block can be have a block with size $1$ to the right of it (with elements in increasing order). We can, however, have arbitrarily many blocks with size $1$ in a row in increasing order. Therefore we have $w_\ell = \lfloor \ell/5 \rfloor + 1$ as there are this many ways to choose the number of blocks of size $5$ to use on the largest elements of the run, with the rest of the blocks singletons. 
\end{ex}

\section{Proof of Theorem \ref{thm-odd-endedstretched} and Combinatorial Interpretations}\label{sec-stretched}
In this section we prove Theorem \ref{thm-odd-endedstretched} by considering $b>1$ and $r>2$.  Following this, we give the combinatorial interpretations of the results in this setting.

\subsection{Stretching by $b$}
To obtain results for $b > 1$, we simply blow-up each element in $[n]$ to a group of $b$ distinct increasing elements and run the Case $r=2$ Algorithm on the groups. In particular, any moves done on the ordered permutation of $[n]$ translate to moving the corresponding group of $b$ blow-up elements in the natural way.

One tangible way to do this is to consider the elements of $[n]$ as fractions $\left\{\frac{1}{1},\frac{2}{1},\ldots,\frac{n}{1}\right\}$, and we then replace each element $\frac{i}{1}$ with the $b$ distinct elements $\frac{bi-b+1}{b}, \frac{bi-b+2}{b},\ldots,\frac{bi}{b}$ (corresponding to the fractions $x$ with denominator $b$ such that $\frac{b(i-1)}{b} < x \leq \frac{bi}{b}$). 

\begin{ex}
If $b=3$, $A = \{1,3\}$, $B= \{2,4\}$, and we had a partition $P=13/24$ of $[4]$, recall that $i_4(P) = 13/2/4$. In the blow-up by $b$, we would replace this partition $P$ with the partition
$$\frac{1}{3}\,\,\frac{2}{3}\,\,\frac{3}{3}\,\,\frac{7}{3}\,\,\frac{8}{3}\,\,\frac{9}{3} \quad / \quad \frac{4}{3}\,\,\frac{5}{3}\,\,\frac{6}{3}\,\,\frac{10}{3}\,\,\frac{11}{3}\,\,\frac{12}{3},$$ 
and the algorithm performs a split move on the furthest right block, which now is a blow-up to two groups each of size $3$, to create 
$$\frac{1}{3}\,\,\frac{2}{3}\,\,\frac{3}{3}\,\,\frac{7}{3}\,\,\frac{8}{3}\,\,\frac{9}{3} \quad / \quad \frac{4}{3}\,\,\frac{5}{3}\,\,\frac{6}{3}\quad / \quad \frac{10}{3}\,\,\frac{11}{3}\,\,\frac{12}{3},$$ 
By ignoring the denominators, we can also view a blow-up of $[n]$ to $[bn]$ where $i \in [n]$ corresponds to the $b$ elements $b(i-1)+j$ for $1 \leq j \leq b$. 

\end{ex}

The proof generalizes easily with this modification, as each permutation element in the original proof simply corresponds to $b$ elements in this extension.
Combinatorial interpretations when $b>1$ simply scale every element to a group with $b$ elements in a straightforward manner.

\subsection{Stretching by $r$}

We now describe how to obtain results for $r >2$. We will assume that $b=1$; using $b>1$ will require the same modification given in the previous section.

As mentioned in the introduction, instead of considering $A \cup B \subseteq \NN$ we now consider $A \cup B \subseteq \NN^* = \{1,r,r+1,2r,2r+1,\ldots\} = \{ x \in \NN : x \equiv 0,1 \mod r\}$.  We write $\NN^* = {\cal O}^* \cup {\cal E}^*$ where ${\cal O}^* = \{x \in \NN : x \equiv 1\mod r\}$ is the set of $r$-stretched odds and ${\cal E}^* = \{x \in \NN : x \equiv 0\mod r\}$ is the set of $r$-stretched evens.  We let $A \subseteq {\cal O}^*$ and $B \subseteq {\cal E}^*$.  Note also that this situation is not covered by Theorem \ref{thm-odd-ended}, since for example if $r=3$ then the number $4$ would appear in $B$, not $A$.  

\medskip

\noindent {\bf Note:} Throughout we will naturally generalize our definitions and assumptions to this stretched setting.  We will always assume $1 \in A$.  We will also assume that $A \cup B$ is odd-ended, i.e. its maximal intervals in $\NN^*$ have endpoints that all lie in ${\cal O}^*$.  In other words, $A \cup B$ must satisfy that if $kr \in B$, then $kr+1 \in A$ and $(k-1)r+1 \in A$. 

We also then generalize the definition of top (bottom) element to indicate that $x \in A \cup B$ and the element above (below) it in $\NN^*$ is \emph{not} in $A \cup B$.  For example, $kr+1 \in A$ is a top element if $(k+1)r$ is not an element of $A \cup B$.

\begin{ex}
    Suppose that $r=6$.  Then ${\cal O}^* = \{1,7,13,19,\ldots\}$ and ${\cal E}^* = \{6,12,18,24,\ldots\}$.  The set $A \cup B = \{1,7,12,13\} \subseteq \mathbb{N}^*$ is odd-ended in this setting with bottom elements $1$ and $7$ and top elements $1$ and $13$.
\end{ex}

Given $A \cup B$ odd-ended, for each non-negative integer $n$ we again inductively construct an involution $i_n$ on $\mathcal{P}^{\text{pos}}_{n,A,B} \cup \mathcal{P}^{\text{neg}}_{n,A,B}$ that is sign-reversing and moves every element in $\mathcal{P}^{\text{neg}}_{n,A,B}$ (or, equivalently, only fixes partitions in $\mathcal{P}^{\text{pos}}_{n,A,B}$).  By saying $i_n$ is sign-reversing, we mean that if $i_n(P) \neq P$ then $i_n(P) \in \mathcal{P}^{\text{pos}}_{n,A,B}$ if $P \in \mathcal{P}^{\text{neg}}_{n,A,B}$ and $i_n(P) \in \mathcal{P}^{\text{neg}}_{n,A,B}$ if $P \in \mathcal{P}^{\text{pos}}_{n,A,B}$.  As before, we will show that the underlying permutation on $i_n(P)$ is the same as the underlying permutation on $P$.  The inductive procedure can equivalently be thought of as an iterative algorithm on a given ordered partition.

The main difference in this proof from the proof of Theorem \ref{thm-odd-ended} is that merges no longer always involve a singleton and another block, but instead involve either $r-1$ singletons and a block with size $kr+1$ or a singleton and a block of size $kr$ (so that all sizes are in $A \cup B$).  Having different types of merge moves means there are more cases to consider, but the structure of the algorithm is the same.  The base cases and inductive hypotheses here are identical with those of Theorem \ref{thm-odd-ended}.

\bigskip

\noindent \textbf{Constructing $i_0$:} When $n=0$ there is a unique ordered partition, the empty partition, which is in $\mathcal{P}^{\text{pos}}_{0,A,B}$, as it has zero blocks with sizes from $B$.  The involution is the identity map, and the relative order condition is trivially satisfied.

\bigskip

\noindent \textbf{Constructing $i_1$:} When $n=1$ there is again a unique ordered partition, with one singleton block $\{1\}$, which is in $\mathcal{P}^{\text{pos}}_{1,A,B}$, as it has zero blocks with sizes from $B$.  The involution here is again the identity map, and also the relative order condition is trivially satisfied.

\bigskip

\noindent {\bf Constructing $i_n$ for $n \geq 2$}: We take as our inductive hypothesis that for $m < n$, a sign-reversing involution $i_m$ on ${\mathcal P}^{\rm pos}_{m, A,B} \cup {\mathcal P}^{\rm neg}_{m, A,B}$, all of whose fixed points lie in ${\mathcal P}^{\rm pos}_{m,A,B}$, has been constructed with the property that for every $P \in {\mathcal P}^{\rm pos}_{m, A,B} \cup {\mathcal P}^{\rm neg}_{m,A,B}$, the permutation underlying $i_n(P)$ is the same as that underlying $P$.

We first construct an explicit involution $i_n$  on ${\mathcal P}^{\rm pos}_{n, A,B} \cup {\mathcal P}^{\rm neg}_{n, A,B}$, that also satisfies the relative order condition.

Suppose that $P:=P_1/P_2/\cdots/P_\ell$ is an ordered partition of $[n]$ with $|P_i| \in A \cup B$ for $i=1,\ldots,\ell$, where the elements inside $P_i$ are listed in increasing order.

\noindent \hrulefill

\noindent \textbf{ALGORITHM: Case $r >2$}

\begin{description}
\item[(1)] Suppose that $|P_\ell|=1$, say block $P_\ell$ consists of the single number $k_{r-1}$.
\begin{description}
\item[(A)] Suppose $|P_{\ell-1}|=kr$.

\begin{description}
\item[Case A1] If $k_{r-1}$ is smaller than the largest entry in $P_{\ell-1}$, then do a {\em block skip} (we will also call a block skip that skips a singleton a {\em singleton skip}): 
fix $P_\ell$ and next consider $P_{\ell-1}$, so $i_n(P) = i_{n-1}(P_1/P_2/\cdots/P_{\ell-1})/P_\ell$. 

\item[Case A2] If $k_{r-1}$ is larger than the largest entry of $P_{\ell-1}$, make a {\em 1-merge move}\footnote{Note that $|P_{\ell-1}|+1 \in A \cup B$ as we are odd ended}: combine blocks $P_{\ell-1}$ and $k_{r-1}$ to produce a block with size $kr+1$, so $i_n(P) = P_1/P_2/\cdots/P_{\ell-1}P_\ell$. 
\end{description}

\item[(B)] Suppose $|P_{\ell-1}|=kr+1$.
\begin{description}

\item[Case B1] If $k_{r-1}$ is to the right of $r-2$ singletons $k_1,\ldots,k_{r-2}$ followed by $P_{\ell-(r-1)}$ with size $kr+1$, and it is not true that the elements of $P_{\ell-(r-1)}$, $k_1$, $\ldots$, $k_{r-1}$ are in increasing order: $k_1$ is larger than the largest element of $P_{\ell-(r-1)}$ and $k_1<k_2<\cdots<k_{r-1}$, then do a \emph{singleton skip}: fix $P_\ell$ and next consider $P_{\ell-1}$, so $i_n(P) = i_{n-1}(P_1/P_2/\cdots/P_{\ell-1})/P_\ell$.

\item[Case B2] If $k_{r-1}$ is to the right of $r-2$ singletons $k_1,\ldots,k_{r-2}$ followed by $P_{\ell-(r-1)}$ with size $kr+1$, the elements of $P_{\ell-(r-1)}$, $k_1$, $\ldots$, $k_{r-1}$ are in increasing order,  
and $P_{\ell-(r-1)}$ has size $kr+1$ which is not a top element of $A \cup B$, then make a {\em $(r-1)$-merge move}: combine blocks $P_{\ell-(r-1)},k_1,\ldots,k_{r-1}$, so $i_n(P)=P_1/P_2/\cdots/P_{\ell-(r-1)}k_1\cdots k_{r-1}$.

\item[Case B3] If $k_{r-1}$ is to the right of $r-2$ singletons $k_1,\ldots,k_{r-2}$ followed by $P_{\ell-(r-1)}$ with size $kr+1$, the elements of $P_{\ell-(r-1)}$, $k_1$, $\ldots$, $k_{r-1}$ are in increasing order,  
and $P_{\ell-(r-1)}$ has size $kr+1$ which {\em is} a top element of $A \cup B$, 
then make a {\em $r$-freeze skip}: remove $P_{\ell-(r-1)},k_1,\ldots,k_{r-1}$ from the partition, then proceed inductively, so we have

$i_n(P)=i_{n-r+1-|P_{\ell-(r-1)}|}(P_1/P_2/\cdots/P_{\ell-r})/P_{\ell-(r-1)}/k_1/\cdots/k_{r-1}$.

\item[Case B4] If $k_{r-1}$ is to the right of $r-2$ singletons $k_1,\ldots,k_{r-2}$ followed by $P_{\ell-(r-1)}$ with size $kr$, then do a {\em singleton skip}. 

\item[Case B5] If $k_{r-1}$ is to the right of fewer than $r-2$ singletons followed by a non-singleton, do a {\em singleton skip}.
\end{description}

\end{description}
\end{description}

\begin{description}
\item[(2)] Suppose that $|P_\ell|>1$, with $k_1,\ldots,k_{r-1}$ the largest $r-1$ element of $P_\ell$.
\begin{description}
\item[(C)] Suppose $|P_\ell|$ is {\em not} a bottom element of $R$.
\begin{description}
\item[Case C1] If $|P_{\ell}|=kr$, then make a {\em $(r-1)$-split move}: replace the block $P_\ell$ with $r$ blocks, $P_\ell-k_1-\cdots-k_{r-1}$, $k_1$, $k_2$, $\ldots$, $k_{r-1}$, in that order, so $i_n(P)=P_1/P_2/\cdots/P_{\ell-1}/P_\ell-k_1-\cdots-k_{r-1}/k_1/\cdots/k_{r-1}$. 

\item[Case C2] If $|P_{\ell}|=kr+1$, then make a {\em $1$-split move}: replace the block $P_\ell$ with two blocks, $P_\ell-k_{r-1}$ and $k_{r-1}$, in that order, so $i_n(P)=P_1/P_2/\cdots/P_{\ell-1}/P_\ell-k_{r-1}/k_{r-1}$. 

\end{description} 
\item[Case D]  If $|P_\ell|$ {\em is} a bottom element of $R$, then do a {\em block skip}. 
\end{description}
\end{description}

\noindent \hrulefill

\bigskip

As in the previous algorithm, few things are immediate. First, the map $i_n$ is indeed a map from ${\mathcal P}^{\rm pos}_{n, A,B} \cup {\mathcal P}^{\rm neg}_{n, A,B}$ to itself, as all block sizes of $i_n(P)$ are, by construction, in $A\cup B$.   Second, it preserves underlying permutations, as only merges and splits of blocks occur.

We now check that it is an involution, proceeding again by induction on $n$, with base cases $n=0,1$ trivial.  We show that whichever case the partition $P=P_1/\cdots/P_{\ell}$ falls into, $i_n(i_n(P))=P$. Throughout let $P':=i_n(P)$.

\begin{description}
\item[Case A1] Here $P' = i_{n-1}(P_1/\cdots/P_{\ell-1})/P_{\ell}$. 
Since the order of the underlying permutation is preserved, the partition $P'$ falls into Case A1, so
\[
i_n(P') = i_{n-1}(i_{n-1}(P_1/P_2/\cdots/P_{\ell-1}))/P_{\ell}.
\]
Since $i_{n-1}$ is an involution (by induction), $i_n(P')=P$. 
\item[Case A2] Here $P'$ falls into Case C2, and evidently $i_n(P')=P$.
\item[Case B1] Our goal here is that $P'$ again must do a singleton skip, as then the result will hold by induction; indeed, then, 
\[
i_n(P') = i_{n-1}(i_{n-1}(P_1/P_2/\cdots/P_{\ell-1}))/P_\ell.
\]
Recall that the order of the underlying permutation does not change. First, suppose that $k_{r-2}>k_{r-1}$. Then the partition $P'$ falls into Case A1, B1, B4, or B5 (as $P_\ell$ is a singleton, and Cases A2, B2, and B3 require $k_{r-2}<k_{r-1}$) and so does a singleton skip again. 

If instead we have $k_{r-2}<k_{r-1}$, then as $P$ is in Case B1 the $r$ elements given by the largest element of $P_{\ell-(r-1)}$, $k_1$, $\ldots$, $k_{r-1}$ are not all in increasing order, so cannot all be part of a single block of $P'$.  Therefore either $P'$ is in Case B5 (if some of the elements are in the same block in $P'$) or Case B1 or B5 (if none of those elements are in the same block in $P'$). In all cases $P'$ also does a singleton skip, so the result holds by induction.

\item[Case B2] Here $P'$ falls into Case C1, and evidently $i_n(P')=P$.
\item[Case B3] Here $P'$ falls into Case B3 again, and by induction $i_n(P')=P$.
\item[Case B4] Our goal is to show that we will again do a singleton skip in $P'$. Here the algorithm on $P$ will proceed with B5 singleton skips until reaching the last singleton $k_1$, when it will either fall into Case A1 and continue on (if the singleton $k_1$ is not larger than the largest entry in $P_{\ell-(r-1)}$), or it will fall into Case A2 (otherwise).  

In the former case, note first that $P$ will fall into Case C1 when arriving at $P_{\ell-(r-1)}$ and produce $r-1$ more singletons.  Therefore the partition $P'$ falls into Case B1 (as it will end in $r$ singletons that are not in order).  

In the latter case, the partition $P'$ has one fewer singleton to the left of $k_{r-1}$ and so $P'$ falls into Case B5. 

In all possibilities, $P'$ also does a singleton skip, and so induction implies that $i_n(P')=P$.

\item[Case B5] Again, our goal is to show that we will do a singleton skip in $P'$. The partition $P'$ will also be in Case B5 unless the algorithm produced more singleton elements in $P'$ to the left of $P_\ell$, which means that $P$ performed skips until reaching the first non-singleton to the left of $k_{r-1}$ and then performed either a 1-split (Case C2) or an $(r-1)$-split (Case C1).

If $P$ fell into Case C2, then in particular the non-singleton in $P$ is an odd block.  Therefore in $P'$ we have $r-1$ or fewer singletons followed by an even block.  If in $P'$ we have exactly $r-1$ singletons followed by an even block, the $P'$ falls into Case B4.  
If in $P'$ we have fewer than $r-1$ singletons followed by an even block, 
then $P'$ falls into Case B5.  

If instead $P$ fell into Case C1, then the non-singleton in $P$ is an even block, and in $P$ it did {\em not} perform Case A2 with the singleton to its right.  This means that the singleton to its right did a singleton skip in $P$ (Case A1) before reaching Case C1 in the non-singleton block, and so the entries of the non-singleton and the singleton to its right must be out-of-order in the underlying permutation.  Therefore the entries of $P'$ are also out of order, and so $P'$ falls into Case B1.  

In all cases the partition $P'$ will perform a singleton skip, and so induction implies that $i_n(P')=P$. 
\item[Case C1] Here $P'$ falls into Case B2, and evidently $i_n(P')=P$.
\item[Case C2] Here $P'$ falls into Case A2, and evidently $i_n(P')=P$.
\item[Case D] Here $P'$ falls into Case D again, and by induction $i_n(P')=P$.
\end{description}

In order for $i_n$ to be sign-reversing, we will show that if $i_n(P) \neq P$, then $i_n(P)$ and $P$ differ by one in the number of blocks with a size in $B$ (those whose size is equivalent to $0$ mod $r$). But this follows from the construction, since we either merge blocks of size $1$ and $kr$ to a single block of size $kr+1$, merge $r-1$ blocks of size $1$ and one of size $kr+1$ to a single block of size $(k+1)r$, or we reverse those moves via splits.  So if $i_n(P) \neq P$, then exactly one of $P$ and $i_n(P)$ is in $\mathcal{P}_{n,A,B}^{\text{pos}}$.

Finally, we show that if in $P$ any block has size equivalent to $0$ mod $r$ then $i_n(P) \neq P$. In fact, as before, we will that if $i_n(P)=P$, then all block sizes of $P$ are $1$ or a top element or a bottom element of $A \cup B$ with particular allowable orderings, which leads to our combinatorial interpretations.  Notice that as $A \cup B$ is odd ended, it follows that all top and bottom elements are in $A$.  Furthermore, all blocks with size $1$ are also in $A$.

\begin{lemma}\label{lem-topbottomeltsstretched}
Let $n \geq 1$.  With the involution $i_n$ defined above, if $i_n(P) = P$, then all block sizes of $P$ are $1$ or a top element or a bottom element.

Furthermore, the necessary and sufficient conditions on the order of the block sizes of $P$ are that, (i), the top elements that are not also bottom elements must be part of a freeze skip $r$-tuple, and\, (ii), the bottom elements that are not also top elements must not have $r-1$ singletons to the right (whose elements are collectively in increasing order) with none of those singleton blocks in a freeze skip $r$-tuple.
\end{lemma}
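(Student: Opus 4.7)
My plan is to mirror the proof of Lemma \ref{lem-topbottomelts}, proceeding by induction on $n$ and analyzing which of the algorithm's Cases A1, A2, B1--B5, C1, C2, D is triggered by the rightmost block of $P$. The base cases $n=0,1$ are immediate. For the inductive step, the key structural observation is that the merge and split cases (A2, B2, C1, C2) always yield $i_n(P) \neq P$ outright, while the skip cases (A1, B1, B3, B4, B5, D) leave some suffix of $P$ fixed and recurse on a strictly smaller partition obtained by removing the skipped blocks, allowing the inductive hypothesis to apply.

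For the first statement, suppose $P = P_1/\cdots/P_\ell$ contains a block whose size is neither $1$, a top element, nor a bottom element of $A\cup B$. I would argue $i_n(P) \neq P$ by induction. If the offending block is $P_\ell$ itself, then its size is in $A\cup B$ but is neither a top nor bottom element; since it is not a bottom element, Case D does not apply, and since it is not a top element, Case C1 or C2 fires (depending on whether the size is $\equiv 0$ or $\equiv 1\pmod r$), producing a split. If $P_\ell$ has size $1$, a top, or a bottom element, then the relevant cases are either merges (yielding $i_n(P) \neq P$) or skips. Crucially, the skip cases only remove blocks whose sizes are singletons, top elements, or bottom elements, so the offending block survives into the smaller partition, and the inductive hypothesis finishes the argument.

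For the second statement, I would prove both directions. For necessity, suppose $P$ is fixed. Then during every recursive call the algorithm must enter a skip case. Excluding Case A2 forces that a bottom-but-not-top block of size $kr$ cannot be directly followed by a singleton extending its run; excluding Case B2 forces that a top-but-not-bottom block of size $kr+1$ must be followed, within the same run, by $r-1$ singletons forming (with it) a freeze skip $r$-tuple --- else the $(r-1)$-merge would fire. For sufficiency, I would check case by case that these ordering constraints guarantee the algorithm only ever encounters Cases A1, B1, B3, B4, B5, or D, so that every recursive step is a skip and $P$ is fixed.

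The main obstacle will be the delicate bookkeeping around Cases B1--B5, whose distinctions depend on the detailed local structure (how many singletons precede the first non-singleton to the left, and whether that non-singleton has size $kr$ or $kr+1$). In particular, verifying that a bottom-but-not-top element of size $kr$ followed by $r-1$ increasing singletons actually triggers Case A2 requires tracking several consecutive B5 (or B4) singleton skips before the algorithm reaches the bottom element itself. Additionally, handling the ``not in a freeze skip'' caveat of condition (ii) when $2 \notin A \cup B$, which permits iteratively stacked singletons, will require an auxiliary inductive claim counting the parity of singletons between bottom elements --- analogous to the observation made after the statement of Lemma \ref{lem-topbottomelts} in the $r=2$ case.
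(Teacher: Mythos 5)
Your treatment of the first statement is essentially the paper's argument and is sound: the merge/split cases A2, B2, C1, C2 immediately give $i_n(P)\neq P$, while the skip cases A1, B1, B3, B4, B5, D only remove blocks whose sizes are $1$, top elements, or bottom elements, so an offending block survives into the recursive call and the inductive hypothesis applies. (One small slip: Case C fires because the last block is not a \emph{bottom} element; whether it is a top element is irrelevant to which of C and D applies.)

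The genuine gap is in your analysis of the second statement: you have matched conditions (i) and (ii) to the wrong cases of the algorithm, so both your necessity and sufficiency arguments would fail as sketched. Since $A\cup B$ is odd-ended, bottom elements lie in ${\cal O}^*$, i.e.\ have size $\equiv 1 \pmod r$; there is no ``bottom-but-not-top block of size $kr$,'' and by the first statement a fixed point contains no block of size $\equiv 0 \pmod r$ at all, so Cases A2 and C1 are automatically impossible and encode no ordering condition. Moreover, Case B2 explicitly requires that the block $P_{\ell-(r-1)}$ of size $kr+1$ is \emph{not} a top element, so the $(r-1)$-merge can never fire on a top-but-not-bottom block; the case that threatens such a block is C2, the $1$-split, which fires as soon as that block is ever the last block of a recursive call, and being wrapped in a freeze skip $r$-tuple (so that it is only ever processed via Case B3) is exactly what prevents this. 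The correct correspondence, which is the one the paper's (terse) proof relies on, is: condition (i) is precisely what excludes Case C2, and condition (ii) is precisely what excludes Case B2, namely a bottom-but-not-top block of size $kr+1$ with $r-1$ increasing singletons to its right, none frozen, eventually triggers the $(r-1)$-merge into a block of size $(k+1)r\in B$. With the case-to-condition mapping repaired, your plan --- every recursive step of a fixed point must be a skip, so one must exclude exactly the moves B2 and C2 and then verify sufficiency case by case --- is the paper's argument; the parity bookkeeping you worry about at the end is only needed to interpret condition (ii) concretely (as in the discussion following the lemma), not to prove the lemma itself.
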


As we did following the statement of Lemma \ref{lem-topbottomelts}, we can understand condition (ii) based on whether $r$ is in $A \cup B$ or not.

So suppose that we take a bottom element that is not also a top element.  If any of the $r-1$ singletons to the right is part of an $r$-freeze skip, then it must be that $r$ consecutive singletons are frozen (whose elements are collectively in increasing order) and therefore $r \notin A \cup B$.  So if $r \in A \cup B$, then (ii) simply says that the bottom element that is not also a top element does not have $r-1$ singletons to the right (whose elements are collectively in increasing order).  But if $r \notin A \cup B$, then $r$ consecutive singletons can be ``frozen'' and so (ii) says that the bottom element that is not also a top element does not have exactly $jr + (r-1)$ singletons to the right (whose elements are collectively in increasing order), as this would perform $j$ consecutive $r$-freeze skips followed by an $(r-1)$-merge on the last $r-1$ elements with the bottom element that is not also a top element.  

To illustrate this, take for example $r=5$ and $A \cup B = \{1,6,10,11\}$ (and so $A = \{1,6,11\}$ and $B = \{10\}$, the block size $6$ is a bottom element that is not a top element, and $r=5 \notin A \cup B$).  Assuming all elements in the blocks are collectively in increasing order: 
\begin{itemize}
    \item  a block of size 6 with exactly $r-1=4$ singletons to the right will perform an $(r-1)$-merge to create a block with size 10 (Case B2);
    \item a block with size 6 with exactly $r+r=10$ singletons to the right will freeze the first $5$ singletons (Case B3), then freeze the next $5$ singletons (Case B3), then skip the block with size $6$ (Case D);  and
    \item a block with size $6$ with exactly $r+(r-1) = 9$ singletons to the right will freeze the first $5$ singletons (Case B3) and then perform an $(r-1)$-merge on the last $4$ singletons and the block with size $6$ (Case B2).
\end{itemize}
Note that if $i_n(P)=P$, then the first and third situations could not occur in $P$, while the second is possible in $P$.

\begin{proof}[Proof of Lemma \ref{lem-topbottomeltsstretched}]
We again proceed by induction on $n$, showing that if $P$ has at least one block size in $B$, then $i_n(P) \neq P$.  The base cases are trivial.  For $n \geq 2$ we consider an ordered set partition of $P=P_1/\cdots /P_\ell$ that has at least one block size in $B$. We consider the cases for $P$ under this assumption, showing why each lead to $i_n(P) \neq P$.

\begin{description}
\item[Case A1] By induction (we skip a block with size $1$).
\item[Case A2] Here $i_n(P)\neq P$.
\item[Case B1] By induction (we skip a block with size $1$).
\item[Case B2] Here $i_n(P)\neq P$.
\item[Case B3] By induction (we skip all blocks with sizes either $1 \in A$ or a top element of $A \cup B$ and so in $A$).
\item[Case B4] By induction (we skip a block with size $1$).
\item[Case B5] By induction (we skip a block with size $1$).
\item[Case C1] Here $i_n(P) \neq P$.
\item[Case C2] Here $i_n(P) \neq P$.
\item[Case D] By induction (we skip a size that is a bottom element of $A \cup B$ and is in $A$). 
\end{description}

The second statement is clear from the algorithm as well, as it is describing the properties needed so that Cases B2 and C2 do not occur; the other moves and splits cannot occur based on the block sizes all being in $A$. This finishes the proof.
\end{proof}

\subsection{General Stretched Combinatorial Interpretations}

The combinatorial interpretations now follow exactly as in Section \ref{sec-pfofodd-ended}.  We consider $b=1$, since the combinatorial interpretations for $b>1$ simply involve blowing up the elements to groups of size $b$.  These generalizations are completely analogous to those in Section \ref{sec-pfofodd-ended}, but we state them for completeness.

With the terminology from the proof and the characterization of the fixed points of $i_n$, we have the following natural definition. 

\begin{defn}\label{def-i_nfixedstretched}
Suppose that $A \subseteq \{x \geq 1: x \equiv 1 \mod r\}$ and $B \subseteq \{x \geq 1 : x \equiv 0 \mod r\}$. Say that an ordered set partition of $[n]$ is {$A,B,r$-good} if:
\begin{itemize}
    \item all blocks are either size $1$ or size that is an endpoint of the maximal intervals,
    \item writing the elements of a block in increasing order, within an increasing run in the underlying permutation the following holds:
    \begin{itemize}
        \item all blocks with size equal to a top element that is not a bottom element is part of a freeze skip $r$-tuple, and
        \item all blocks with size equal to a bottom element that are not also top elements do not have $r-1$ singletons immediately to the right with none of the singletons in a freeze skip $r$-tuple.
    \end{itemize}
\end{itemize}
\end{defn}

Generalizing Corollary \ref{cor-partitioninterp}, we have the following.

\begin{cor}\label{cor-partitioninterpstretched}
Suppose that $A \subseteq \{x \geq 1: x \equiv 1 \mod r\}$ and $B \subseteq \{x \geq 1 : x \equiv 0 \mod r\}$ with $A \cup B$ odd-ended and $1 \in A$.  
Define $(c_n)_{n \geq 0}$ via 
\[
\left(1-\sum_{a \in A} \frac{x^a}{a!} + \sum_{b \in B} \frac{x^b}{b!} \right)^{-1} = \sum_{n \geq 0} c_n \frac{x^n}{n!}.
\]
Then the values $c_n$ are all non-negative integers, and moreover have a natural combinatorial interpretation as the number of $A,B,r$-good ordered set partitions of $[n]$.
\end{cor}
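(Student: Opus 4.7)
The plan is to assemble three ingredients that are already in place and read the statement off of them. First, the signed-set identity from the introduction gives
\[
c_n = |\mathcal{P}^{\mathrm{pos}}_{n,A,B}| - |\mathcal{P}^{\mathrm{neg}}_{n,A,B}|
\]
in complete generality (the geometric-series expansion of $(F_{A,B}(x))^{-1}$ makes no use of the non-stretched hypothesis, only that $A \cup B$ is a set of positive integers). Second, in the proof of Theorem \ref{thm-odd-endedstretched} we have constructed a sign-reversing involution $i_n$ on $\mathcal{P}^{\mathrm{pos}}_{n,A,B} \cup \mathcal{P}^{\mathrm{neg}}_{n,A,B}$ whose fixed points all lie in $\mathcal{P}^{\mathrm{pos}}_{n,A,B}$. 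Third, Lemma \ref{lem-topbottomeltsstretched} characterizes those fixed points.

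Given these, I would argue as follows. The involution $i_n$ pairs every non-fixed element of $\mathcal{P}^{\mathrm{neg}}_{n,A,B}$ with a distinct non-fixed element of $\mathcal{P}^{\mathrm{pos}}_{n,A,B}$, and vice versa, so those contributions cancel in the signed count above. What remains is the set of fixed points, each contributing $+1$; hence
\[
c_n = \#\{P \in \mathcal{P}^{\mathrm{pos}}_{n,A,B} \cup \mathcal{P}^{\mathrm{neg}}_{n,A,B} : i_n(P) = P\} \geq 0,
\]
which simultaneously establishes non-negativity and realizes $c_n$ as an honest cardinality.

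To identify this cardinality with the number of $A,B,r$-good ordered set partitions of $[n]$, I would simply compare Lemma \ref{lem-topbottomeltsstretched} with Definition \ref{def-i_nfixedstretched}. The lemma's first clause (all block sizes are $1$ or a top or bottom element of $A \cup B$) and its conditions (i) and (ii) on the ordering within each maximal increasing run are verbatim the bullets defining $A,B,r$-goodness. Thus the set of fixed points of $i_n$ equals the set of $A,B,r$-good ordered set partitions of $[n]$, and the corollary follows.

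The main obstacle here is not really in the corollary itself but in the results it invokes: the subtle case analysis showing that $i_n$ is a well-defined involution (especially the freeze-skip cases B3 and the various B5-type propagations) and the induction identifying its fixed points. Since that work has been carried out in the preceding subsections, the proof of Corollary \ref{cor-partitioninterpstretched} reduces to the short chain of implications sketched above, and I would write it as a brief paragraph citing Theorem \ref{thm-odd-endedstretched}, Lemma \ref{lem-topbottomeltsstretched}, and Definition \ref{def-i_nfixedstretched}.
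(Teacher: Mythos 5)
Your proposal is correct and is essentially the paper's own route: the paper obtains Corollary \ref{cor-partitioninterpstretched} exactly by combining the signed-count identity $c_n = |\mathcal{P}^{\rm pos}_{n,A,B}| - |\mathcal{P}^{\rm neg}_{n,A,B}|$ with the sign-reversing involution from the proof of Theorem \ref{thm-odd-endedstretched} and the fixed-point characterization of Lemma \ref{lem-topbottomeltsstretched}, which matches Definition \ref{def-i_nfixedstretched}. Nothing is missing; the paper likewise treats the heavy lifting as already done in the preceding subsections and states the corollary as an immediate consequence.
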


We extend the definitions of weight to the $r$-stretched case as well.

\begin{defn}
Suppose that $A \subseteq \{x \geq 1: x \equiv 1 \mod r\}$ and $B \subseteq \{x \geq 1 : x \equiv 0 \mod r\}$ with $1 \in A$. Let $\ell \in [n]$.  The {\em weight $w_\ell$} is defined as the number of ordered set partitions of $\ell$ ordered elements so that:
\begin{itemize}
    \item all blocks are either size $1$ or size that is an endpoint of the maximal intervals,
    \item all blocks with size equal to a top element that is not a bottom element is part of a freeze skip $r$-tuple, and
    \item all blocks with size equal to a bottom element that are not also top elements do not have $r-1$ singletons immediately to the right with none of the singletons in a freeze skip $r$-tuple.
\end{itemize}
\end{defn}

\begin{defn}
For a permutation $\sigma$ of $[n]$, define the {\em weight of $\sigma$}, denoted $w_\sigma$, as the product of the weights of the lengths of all maximal increasing runs of $\sigma$.
\end{defn}

\begin{cor}\label{cor-perminterpstretched}
Suppose that $A \subseteq \{x \geq 1: x \equiv 1 \mod r\}$ and $B \subseteq \{x \geq 1 : x \equiv 0 \mod r\}$ with $A \cup B$ odd-ended and $1 \in A$.  
Define $(c_n)_{n \geq 0}$ via 
\[
\left(1-\sum_{a \in A} \frac{x^a}{a!} + \sum_{b \in B} \frac{x^b}{b!} \right)^{-1} = \sum_{n \geq 0} c_n \frac{x^n}{n!}.
\]
Then the values $c_n$ are all non-negative integers, and moreover have a natural combinatorial interpretation as  
\[
c_n = \sum_\sigma w_\sigma,
\]
where the sum is over all permutations $\sigma$ of $[n]$.

\end{cor}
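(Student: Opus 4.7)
The plan is to mirror the proof of Corollary \ref{cor-perminterp} in the stretched setting. First, I would invoke Corollary \ref{cor-partitioninterpstretched} to reduce the claim to showing that for each permutation $\sigma$ of $[n]$, the number of $A,B,r$-good ordered set partitions whose underlying permutation is $\sigma$ is exactly $w_\sigma$. Summing over all $\sigma \in S_n$ would then yield the desired identity $c_n = \sum_\sigma w_\sigma$.

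The key ingredient is a stretched analog of Lemma \ref{lem-w_sigma}, which I would prove by showing that the $A,B,r$-good structure decomposes across the maximal increasing runs of $\sigma$. Suppose $\sigma$ has maximal increasing runs of lengths $\ell_1, \ldots, \ell_k$. Since every block in an $A,B,r$-good partition lists its elements in increasing order, no block can span a descent of $\sigma$, and so each block is entirely contained in one maximal increasing run. The new wrinkle is that a freeze-skip $r$-tuple clusters $r$ blocks, but by Definition \ref{def-i_nfixedstretched} the elements of its constituent blocks must collectively be in increasing order, so the entire $r$-tuple likewise lies within a single maximal increasing run. Similarly, the ``$r-1$ singletons to the right'' clause attached to a bottom element only constrains those singletons when they are collectively in increasing order, so this too is a within-run condition. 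Consequently, the choices of partition inside different maximal increasing runs are independent, and within a single run of length $\ell_i$ the number of legal local configurations is exactly $w_{\ell_i}$ by definition. Multiplying gives $\prod_i w_{\ell_i} = w_\sigma$ for the number of $A,B,r$-good partitions with underlying permutation $\sigma$, as required.

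The main step to watch is the decoupling across runs in the presence of the more intricate freeze-skip $r$-tuple and ``$r-1$ trailing singletons'' conditions, since a single freeze-skip event now bundles $r$ blocks rather than $2$ and the bottom-element clause involves multiple singletons. Once one confirms that every such bundle sits inside a single maximal increasing run (a direct consequence of the in-order requirements in Definition \ref{def-i_nfixedstretched}), the remainder is bookkeeping that parallels the proof of Corollary \ref{cor-perminterp}.
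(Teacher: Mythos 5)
Your proposal is correct and follows essentially the same route as the paper: the paper simply notes that the stretched interpretations ``follow exactly as in Section \ref{sec-pfofodd-ended},'' i.e. combine Corollary \ref{cor-partitioninterpstretched} with the stretched analog of Lemma \ref{lem-w_sigma}, whose proof is precisely your run-decomposition argument. Your observation that freeze-skip $r$-tuples and the trailing-singleton condition are within-run constraints (so the count factors as $\prod_i w_{\ell_i}$) is exactly the point the paper relies on.
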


See Section \ref{sec-run-theorem} for alternate combinatorial interpretations that are equivalent to the results in Corollaries \ref{cor-partitioninterpstretched} and \ref{cor-perminterpstretched} that follow from the Run Theorem.

\subsection{Recovering Gessel's Interpretation in the $r$-Stretched Case}\label{subsec-GesselStretched}
Taking the first $A \cup B = \{1,r,r+1,\ldots,(m-1)r,(m-1)r+1\}$, the $r$-stretched version of $[2m-1]$, we see that the fixed points are those with block sizes only given by $1$ and $(m-1)r+1$.  The case with $m=1$ is again trivial, and for $m>1$, by Corollary \ref{cor-perminterpstretched} the maximal increasing runs are made up of freeze skip $r$-tuples (with $mr$ elements in total) with a potential group of up to $r-1$ singleton blocks to the left, and so if there are $k$ freeze skip $r$-tuples, there must be $rmk+j$ elements in the maximal increasing run for some $0 \leq j \leq r-1$, and so they have length $kmr+j$ for some $0 \leq j \leq r-1$. This is Gessel's result \cite[Proposition 2.4]{Gessel}.

When $A \cup B = \{1,r,r+1,2r,\ldots\}$, we only have a single bottom element $1$.  By Corollary \ref{cor-perminterpstretched}, the maximal increasing runs are made up of up to $r-1$ singletons, and so $c_n$ counts the number of permutations of $[n]$ whose maximal increasing runs have length at most $r-1$.
As mentioned in the Introduction, this recovers a result of David and Barton (\cite[pp. 156-157]{DavidBarton}, see also for example \cite{GesselZhuang}).

\section{General conditions for non-negativity}\label{sec-necsuff}
In this section, we consider the necessary and sufficient conditions for non-negativity of $G_{A,B}(x)$ when $A \subseteq \mathcal{O}$ and $B \subseteq \mathcal{E}$ with $1 \in A$.  We prove a necessary condition from Theorem \ref{thm-necessary} and then indicate how our proof technique of fixing the underlying permutation is not able to prove Conjecture \ref{conj-sufficient}.

\subsection{Proof of Theorem \ref{thm-necessary}}

Suppose that $A \subseteq \mathcal{O}$ and $B \subseteq \mathcal{E}$ with $1 \in A$. First assume $A \cup B = [2m]$. 
We will consider the general case (where sizes at least $2m+2$ are also allowed in $A \cup B$) in the last paragraph of the proof. 

We focus on the value $c_{2m+2}$. Recall that the algorithm from the proof of Theorem \ref{thm-odd-ended} considers all ordered set partitions of $[2m+2]$ so that the block sizes are at most $2m$, and so that the elements in each block are in increasing order and so have an associated underlying permutation. From the algorithm, the term $c_{2m+2}$ counts the ordered set partitions of $[2m+2]$ that are fixed points of the involution, with the signs of each fixed partition based on the sizes of the blocks in the partition. So we count the signed fixed points of $i_{2m+2}$. We note that the proof of Theorem \ref{thm-odd-ended} showed that all fixed points came with a positive sign, but we are not odd-ended and so must consider the signs of the fixed points.  

Any block of size $2$ through $2m$, if considered by the algorithm (as in, it has not been skipped in a freeze skip pair), will split off a singleton block in Case D, and so such a ordered set partition will not be fixed by the algorithm. In particular, the fixed points of the algorithm must start with a singleton block $P_\ell$.

Further, any block of size $1$ will perform a non-skip move under the algorithm unless either:
\begin{enumerate}
    \item  its entry in the underlying permutation is in increasing order with the entry to the left, and the next block has size $2m$ (so those two blocks will be in a freeze skip pair), or 
    \item  its entry in the underlying permutation is not in increasing order with the entry to the left (so the singleton block will be skipped).
\end{enumerate}
We consider the possibilities separately.

If the entries of the singleton $P_{\ell}$ and the block $P_{\ell-1}$ to its left are in order, then $P_{\ell-1}$ must be of size $2m$, so  $P_{\ell-2}=P_1$ is a singleton.  The only restriction on the entries of the permutation is that the last $2m+1$ entries --- encompassing the blocks $P_{\ell-1}$ and $P_\ell$ --- must be increasing, so there are exactly $2m+2$ such ordered partitions into blocks with sizes $1$, $2m$, $1$, with the last two blocks having entries in the underlying permutation in increasing order.

If the entries of $P_{\ell}$ and $P_{\ell-1}$ are not in increasing order, then by Case B the singleton block $P_{\ell}$ is skipped. Again, the next block $P_{\ell-1}$ will split unless it is also singleton. By the same argument above, $P_{\ell-1}$ will do a freeze skip if its entries are in increasing order with the entries of $P_{\ell-2}$ and $P_{\ell-2}$ has size $2m$; the same count above gives $2m+1$ ways this can occur (note that the last two entries in the underlying permutation must not be increasing, by assumption).    

If instead the block $P_{\ell-1}$ has entries that are not in increasing order with those in $P_{\ell-2}$, then $P_{\ell-1}$ is skipped.  Now there are $2m$ elements remaining, 
and to be a fixed point of the involution, all remaining blocks must do block skips. Therefore all blocks are singletons whose entries are not in increasing order, and so $P$ is partitioned into $2m+2$ singleton blocks, and the underlying permutation is the decreasing permutation. 

We now consider the signs associated with these fixed points. Those partitions with a block with size $2m$ have a negative sign.  The partition of all singletons has a positive sign, and so $c_{2m+2} = -(2m+2)-(2m+1)+1 = -2(2m+1)<0$. 

\medskip

Lastly, suppose there are also blocks with size at least $2m+2$. 
Since any blocks with size $2m+2$ count with a negative sign and and all blocks with size at least $2m+3$ do not alter the count for partitions of $[2m+2]$, this shows that the coefficient $c_{2m+2}$ is negative in this case (with value unchanged if $2m+2 \notin A \cup B$, and value $c_{2m+2} = -2(2m+1)-1$ if $2m+2 \in A \cup B$).

\subsection{Sufficient Condition Limitations via Combinatorial Algorithm}
In this section we describe the limitations of our current proof technique in relation to Conjecture \ref{conj-sufficient}. 

Note that our algorithm fixes the underlying permutation. We will find a particular permutation for a particular $A \subseteq \mathcal{O}$ and $B \subseteq \mathcal{E}$ so that the net count for this permutation is negative, and so our method of finding non-negativity by proving it for each fixed permutation (via a sign-reversing involution that only fixes elements that count positively) cannot work.

Consider $A \cup B = \{1,4,6,8,10\ldots\}$ 
and $n=5$, and fix the permutation 12345.  Then only blocks with size $1$ and $4$ can be used, and there is one way to use all singleton blocks, namely $1/2/3/4/5$, whereas there are two ways to use a block with size $4$ on this fixed permutation: $1/2345$ and $1234/5$. So the net count for this fixed permutation is $-1$.  Any sign-reversing involution that fixes the underlying permutation will have a fixed point that counts negatively for this particular permutation.

We note that this does not imply that there are negative coefficients in the reciprocal series, since other permutations of $[5]$ have positive counts; it only shows that we cannot produce a proof where the underlying permutations stay the same and we have an involution leaving only positive blocks for each fixed permutation.

\subsection{Sufficient Condition via Analytic Techniques}
We next show analytically how Conjecture \ref{conj-sufficient} can be reduced to specific sets $A \cup B$ for each positive integer $m$.  We use this to prove Conjecture \ref{conj-sufficient} in the cases where $m=1$ and $m=2$.

To begin, we have the following general lemma.

\begin{lemma} \label{lem-conjreduce}
Suppose that $f(x)$ and $g(x)$ satisfy $f(0)=1$ and $g(0)=0$ and furthermore suppose that the series for $\frac{1}{f(x)}$ and the series for $g(x)$ have all coefficients non-negative.  

Then the coefficients of the series for $\frac{1}{f(x)-g(x)}$ are also non-negative. 
\end{lemma}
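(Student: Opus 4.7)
The plan is to rewrite the reciprocal as a product of two series whose non-negativity is either given or easily derived, and then expand the second factor as a convergent geometric series (in the formal power series sense). Specifically, I would start from the algebraic identity
\[
\frac{1}{f(x)-g(x)} \;=\; \frac{1}{f(x)}\cdot\frac{1}{1-\dfrac{g(x)}{f(x)}},
\]
which is legitimate because $f(0)=1$ ensures $1/f(x)$ is a well-defined formal power series, and $g(0)=0$ ensures that $g(x)/f(x)$ has zero constant term, so the geometric series expansion below is a well-defined formal power series.

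Next, I would expand
\[
\frac{1}{1-g(x)/f(x)} \;=\; \sum_{k\ge 0}\left(\frac{g(x)}{f(x)}\right)^{k},
\]
so that
\[
\frac{1}{f(x)-g(x)} \;=\; \frac{1}{f(x)} \cdot \sum_{k\ge 0} g(x)^{k}\cdot\left(\frac{1}{f(x)}\right)^{k} \;=\; \sum_{k\ge 0} g(x)^{k} \cdot \left(\frac{1}{f(x)}\right)^{k+1}.
\]
The central observation is that the set of formal power series with non-negative coefficients is closed under addition and multiplication. By hypothesis, $1/f(x)$ has non-negative coefficients, so every power $(1/f(x))^{k+1}$ does as well. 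Similarly $g(x)$ has non-negative coefficients, so every power $g(x)^{k}$ does. Hence each summand $g(x)^{k}(1/f(x))^{k+1}$ has non-negative coefficients. Summing over $k\ge 0$ is legitimate because, since $g(0)=0$, the series $g(x)^{k}$ contributes only to coefficients of $x^n$ with $n\ge k$, so for each fixed $n$ only finitely many terms contribute to the coefficient of $x^n$.

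The only subtlety, and arguably the one point that requires care rather than being immediate, is the legitimacy of the formal geometric expansion: one must check that the rearrangement of $1/(f-g)$ into the double sum is valid as formal power series. This follows from the vanishing of the constant term of $g/f$, which guarantees that the coefficient of each $x^n$ in the sum is a finite sum. Once this is in place, the conclusion is immediate: the coefficient of $x^n$ in $1/(f(x)-g(x))$ is a finite sum of non-negative quantities, hence non-negative.
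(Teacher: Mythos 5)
Your proof is correct and is essentially the same as the paper's: both factor $\frac{1}{f(x)-g(x)}$ as $\frac{1}{f(x)}\cdot\sum_{k\ge 0}\left(g(x)\cdot\frac{1}{f(x)}\right)^{k}$ and conclude non-negativity since $1/f(x)$ and $g(x)$ have non-negative coefficients. Your extra remark on why the geometric expansion is legitimate (because $g(0)=0$ makes each coefficient a finite sum) is a welcome bit of care that the paper leaves implicit.
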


\begin{proof}
We compute
\[
\frac{1}{f(x)-g(x)} = \frac{1}{f(x)}\cdot \frac{1}{1-g(x)/f(x)} = \left( \frac{1}{f(x)} \right) \left(\sum_{k=0}^{\infty} \left( g(x) \cdot \frac{1}{f(x)} \right)^k\right),
\]
and by assumption both $\frac{1}{f(x)}$ and $g(x) \cdot \frac{1}{f(x)}$ have all coefficients non-negative.  This completes the proof.
\end{proof}

This leads to the following.

\begin{cor} \label{cor-sufficientreduce}
    Suppose that $A \subseteq \mathcal{O}$ and $B \subseteq \mathcal{E}$ with $1 \in A$, and suppose that the smallest positive number in $\{1,2,3,\ldots\} \setminus (A \cup B)$ is $2m$.  

    If the series for 
\[
\frac{1}{1+ \sum_{k=1}^{2m-1} \frac{(-1)^k x^{k}}{k!} + \sum_{k=m+1}^{\infty} \frac{x^{2k}}{(2k)!}},
\]
corresponding to coefficients in $[2m-1] \cup \{2m+2,2m+4,\ldots\}$, has all coefficients non-negative, then the series for 
\[
\frac{1}{1-\sum_{a \in A} \frac{x^a}{a!} + \sum_{b \in B} \frac{x^b}{b!}}
\]
has all coefficients non-negative.
\end{cor}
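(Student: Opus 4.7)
The plan is to apply Lemma \ref{lem-conjreduce} directly, with $f(x)$ chosen to be the specific denominator appearing in the hypothesis of the corollary, and $g(x) := f(x) - F_{A,B}(x)$ capturing everything ``extra'' that $F_{A,B}$ lacks or has in the wrong sign. The main thing to verify is that this $g(x)$ has only non-negative power-series coefficients and vanishes at $0$; given that, Lemma \ref{lem-conjreduce} closes the argument.

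First, I would unpack the hypothesis that the smallest positive integer missing from $A \cup B$ is $2m$. This means the initial interval $[2m-1]$ is entirely contained in $A \cup B$, so (using $A \subseteq \mathcal{O}$, $B \subseteq \mathcal{E}$) we automatically have $\{1,3,\ldots,2m-1\} \subseteq A$ and $\{2,4,\ldots,2m-2\} \subseteq B$. Set
\[
f(x) = 1 + \sum_{k=1}^{2m-1} \frac{(-1)^k x^{k}}{k!} + \sum_{k=m+1}^{\infty} \frac{x^{2k}}{(2k)!},
\]
so by hypothesis $\tfrac{1}{f(x)}$ has non-negative coefficients, and clearly $f(0)=1$.

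Next, I would compute $g(x) = f(x) - F_{A,B}(x)$ explicitly. Writing $A'' = A \cap \{2m+1,2m+3,\ldots\}$ and $B'' = B \cap \{2m+2,2m+4,\ldots\}$, the low-order terms (up to degree $2m-1$) in $f$ and $F_{A,B}$ agree identically, so they cancel. What remains is
\[
g(x) = \sum_{a \in A''} \frac{x^a}{a!} \;+\; \sum_{\substack{k \geq m+1 \\ 2k \notin B''}} \frac{x^{2k}}{(2k)!}.
\]
Both sums have non-negative coefficients term by term, and every term has positive degree, so $g(0)=0$. Thus the pair $(f,g)$ satisfies the hypotheses of Lemma \ref{lem-conjreduce}, which yields that $\tfrac{1}{f(x)-g(x)} = \tfrac{1}{F_{A,B}(x)}$ has non-negative coefficients.

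The main potential obstacle is not really an obstacle so much as a bookkeeping care: one must ensure that the decomposition above really does split $f - F_{A,B}$ into pieces that are manifestly non-negative, which in turn hinges on the containments $\{1,3,\ldots,2m-1\}\subseteq A$ and $\{2,4,\ldots,2m-2\}\subseteq B$ forced by the assumption on $2m$. Once those containments are noted, no genuine computation remains, and the corollary follows immediately from Lemma \ref{lem-conjreduce}.
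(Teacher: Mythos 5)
Your proposal is correct and takes essentially the same route as the paper's proof: the paper also applies Lemma \ref{lem-conjreduce} with $f = f_m$ and $g = f_m - F_{A,B}$, identifying $g$ as $\sum_{c \in C} x^c/c!$ where $C$ consists of the odd numbers above $2m$ lying in $A$ together with the even numbers above $2m$ missing from $B$ --- exactly your $A''$ and the complement of $B''$. Your explicit check of the degree-$\le 2m$ cancellation (via the forced containments $\{1,3,\ldots,2m-1\}\subseteq A$, $\{2,\ldots,2m-2\}\subseteq B$) is the same bookkeeping the paper leaves implicit.
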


\begin{proof}
For each positive $m \geq 1$ let 
\[
f_m(x)
=1+ \sum_{k=1}^{2m-1} \frac{(-1)^k x^{k}}{k!} + \sum_{k=m+1}^{\infty} \frac{x^{2k}}{(2k)!}.
\]
By assumption, suppose also that $\frac{1}{f_m(x)}$ has all non-negative coefficients. 

Consider now the arbitrary $A \cup B$ with $1 \in A$ and with $2m$ the smallest positive integer missing from $A \cup B$. Let $C$ be the set of even numbers, strictly greater than $2m$, that are {\it missing from} $B$, together with the set of odd numbers, strictly greater than $2m$, that are {\it in} $A$. Then
\[
\frac{1}{1 - \sum_{a \in A} \frac{x^a}{a!} + \sum_{b \in B} \frac{x^b}{b!} }  =   \frac{1}{f_m(x) - \sum_{c \in C} \frac{x^c}{c!}}
\]
has all coefficients non-negative by Lemma \ref{lem-conjreduce}.
\end{proof}

So to prove Conjecture \ref{conj-sufficient}, by Corollary \ref{cor-sufficientreduce} it suffices to consider 
\[
A \cup B = \{1,2,\ldots,2m-1,2m+2,2m+4,\ldots\}
\]
for each positive integer $m$.  This leads to the following proof of Conjecture \ref{conj-sufficient} when $m=1$.

\begin{thm}\label{thm-analytic}
Let $f(x) = 1 - x + \sum_{k \mbox{ even, } k \geq 4} x^k/k!$.  Then the reciprocal $\frac{1}{f(x)}$ has a series with all non-negative coefficients.
\end{thm}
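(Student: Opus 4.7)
The plan is to prove the stronger statement that the Taylor coefficients $d_n := [x^n](1/f(x))$ satisfy the ratio bound $d_n \geq d_{n-1}/x_0$ for every $n \geq 1$, where $x_0$ denotes the smallest positive real root of $f$. Iterating this bound from $d_0 = 1$ yields $d_n \geq 1/x_0^n > 0$, which proves the theorem.

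The first step is to establish the existence of $x_0$ and that $x_0 > 1$. Rewriting $f(x) = \cosh x - x - x^2/2$, one checks that $f'(x) = \sinh x - 1 - x$ is negative on $[0,1]$ (since $\sinh 1 < 2$), so $f$ decreases from $f(0)=1$ to $f(1) = \cosh 1 - 3/2 > 0$; together with $f(2) = \cosh 2 - 4 < 0$, the intermediate value theorem places $x_0$ strictly between $1$ and $2$. The equation $f(x_0)=0$ then yields the key identity
\[
\sum_{k \geq 2} \frac{x_0^{2k}}{(2k)!} \;=\; \cosh x_0 - 1 - \frac{x_0^2}{2} \;=\; x_0 - 1.
\]

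Second, extracting coefficients from $f(x)\sum_{n \geq 0} d_n x^n = 1$ gives the Cauchy-product recurrence
\[
d_n \;=\; d_{n-1} - \sum_{\substack{k \geq 2 \\ 2k \leq n}} \frac{d_{n-2k}}{(2k)!} \qquad (n \geq 1),
\]
with base case $d_1 = 1 \geq 1/x_0$ immediate from $x_0 > 1$. For the inductive step, assuming $d_j \geq d_{j-1}/x_0$ for all $1 \leq j \leq n-1$, I would iterate this inequality $2k-1$ times (in the form $d_j \leq x_0\, d_{j+1}$) to obtain $d_{n-2k} \leq x_0^{2k-1}\, d_{n-1}$ for each $k \geq 2$ with $2k \leq n$. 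Substituting into the recurrence and invoking the key identity gives
\[
d_n \;\geq\; d_{n-1} - \frac{d_{n-1}}{x_0}\sum_{k \geq 2}\frac{x_0^{2k}}{(2k)!} \;=\; d_{n-1} - \frac{d_{n-1}(x_0-1)}{x_0} \;=\; \frac{d_{n-1}}{x_0},
\]
which closes the induction.

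The main obstacle is identifying the correct sharp inductive quantity. The ratio bound works with constant exactly $1/x_0$ because the defining equation $f(x_0)=0$ is precisely what collapses the tail $\sum_{k \geq 2} x_0^{2k}/(2k)!$ to $x_0 - 1$, making the inductive inequality close up with equality rather than slack. Heuristically, the decay rate of $d_n$ must be governed by the location of the dominant singularity $x_0$ of $1/f$; once this is recognized and the algebra $\cosh x_0 - 1 - x_0^2/2 = x_0 - 1$ is extracted from the root equation, the remainder is routine. Any attempt to use a constant other than $1/x_0$ (larger, or smaller than $1/x_1$ for the second positive root) would either fail at the base case or fail to close up the tail estimate, explaining why this particular argument is expected to be fragile and difficult to extend to larger $m$.
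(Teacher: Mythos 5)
Your proof is correct. The recurrence $d_n = d_{n-1} - \sum_{k \ge 2,\, 2k \le n} d_{n-2k}/(2k)!$ is right, the chain $d_{n-2k} \le x_0^{2k-1} d_{n-1}$ uses only instances of the inductive hypothesis with indices between $1$ and $n-1$, and the identity $\sum_{k \ge 2} x_0^{2k}/(2k)! = x_0 - 1$ coming from $f(x_0)=0$ closes the induction exactly; the one step worth making explicit is that replacing the finite sum $\sum_{2k \le n}$ by the full tail $\sum_{k \ge 2}$ requires $d_{n-1} \ge 0$, which your induction already supplies via $d_j \ge x_0^{-j}$. The paper's argument is a close cousin but organized differently: it never locates the root $x_0$, using only the sign $f(2) < 0$. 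It multiplies $f(x)$ by the geometric series $g(x) = \sum_{n \ge 0} (x/2)^n$ and observes that $[x^n]\bigl(f(x)g(x)\bigr) \le (1/2)^n f(2) \le 0$ for $n \ge 1$, so that $f(x)g(x) = 1 - p(x)$ with $p$ coefficientwise non-negative and $1/f = g \cdot (1-p)^{-1}$ is a product of non-negative series, with no recurrence or induction. Your version buys sharpness: the explicit geometric lower bound $d_n \ge x_0^{-n}$ tied to the dominant singularity of $1/f$. The paper's version buys robustness and brevity: any $t > 1$ with $f(t) \le 0$ works (indeed your induction also closes with $1/t$ in place of $1/x_0$ for any such $t$, e.g.\ $t = 2$, so the exact root is not essential), and it is this factorization template, with a modified $g$, that the paper reuses to handle the harder $m=2$ case in Theorem~\ref{thm-analytic2}, where a sharp single-ratio bound would be more delicate to run.
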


\begin{proof}
Let $a>0$.  We'll show that there is a value for $a$ so that $f(x) g(x) = 1 -p(x)$, where $g(x)  = 1 + a x + a^2 x^2 + \cdots$ and $p(x)$ is a non-negative series with $p(0)=0$.  Then the reciprocal $1/f(x) = g(x) \cdot (1/(1-p(x)))$ is a product of two non-negative series and is hence non-negative.

Let $n$ be a positive integer.  Then

\begin{eqnarray*} [x^n] f(x) g(x) &=& a^n  - a^{n-1} + \sum_{k \mbox{ even, } 4 \leq k \leq n} a^{n-k}/k!\\
&=& a^n \left(1  - 1/a + \sum_{k \mbox{ even, } 4 \leq k \leq n} (1/a)^k/k! \right)\\
&\leq& a^n f(1/a).
\end{eqnarray*}
If we find $a>0$ so that $f(1/a) < 0$, then $[x^n]f(x)g(x) \leq 0$ for all $n \geq 1$ as desired. 
But $f(x) = \frac{e^{x}+e^{-x}}{2}-x-x^2/2$ and $f(2)<0$ so we may take $a = 1/2$.
\end{proof}

With similar reasoning, we have a proof when $m=2$.

\begin{thm}\label{thm-analytic2}
Let $f(x) = 1 - x +\frac{x^2}{2!} - \frac{x^3}{3!} + \sum_{k \mbox{ even, } k \geq 6} x^k/k!$.  Then the reciprocal $\frac{1}{f(x)}$ has a series with all non-negative coefficients.
\end{thm}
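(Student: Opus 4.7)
The plan is to mimic the approach of Theorem \ref{thm-analytic}: exhibit a power series $g(x)$ with non-negative Taylor coefficients and a series $p(x)$ with non-negative coefficients and $p(0)=0$, satisfying $f(x)g(x) = 1 - p(x)$. Then $1/f(x) = g(x)/(1-p(x)) = g(x)\sum_{k\geq 0} p(x)^k$ is manifestly a product of non-negative series. The naive analogue of the $m=1$ choice $g(x) = 1/(1-ax)$ fails here because $[x^2]f(x)g(x) = a^2 - a + 1/2 = (a-1/2)^2 + 1/4 > 0$ for every real $a$, so a more elaborate auxiliary function is needed.

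First I would take
$$g(x) = \frac{1 + \tfrac{2}{3}x + \tfrac{1}{6}x^2}{1 - x/3},$$
whose non-negativity is immediate (product of two non-negative series). Writing $g_n := [x^n]g(x)$, one computes $g_0 = g_1 = 1$, $g_2 = 1/2$, $g_3 = 1/6$, and in general $g_n = 1/(2\cdot 3^{n-2})$ for $n \geq 2$. The polynomial numerator is chosen precisely so that $[x^n]f(x)g(x) = 0$ for $n = 1,2,3$; a direct expansion verifies this and also gives $[x^4]fg = -1/36$.

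Next, for $n \geq 5$ I would establish $[x^n]fg \leq 0$ by a uniform bound. Since $n-3 \geq 2$, all of $g_n, g_{n-1}, g_{n-2}, g_{n-3}$ obey the geometric formula, and a short algebraic check gives
$$g_n - g_{n-1} + \tfrac{1}{2}g_{n-2} - \tfrac{1}{6}g_{n-3} = -\frac{1}{3^{n-2}}.$$
Hence $[x^n]fg = -1/3^{n-2} + R_n$ where $R_n := \sum_{k \geq 6,\, k \text{ even},\, k \leq n} g_{n-k}/k!$. Using the uniform bound $g_m \leq 1/(2\cdot 3^{m-2})$ valid for all $m \geq 0$ (equality for $m \geq 2$, strict for $m = 0, 1$), one estimates
$$R_n \leq \frac{1}{2\cdot 3^{n-2}} \sum_{k \geq 6,\, k \text{ even}} \frac{3^k}{k!} = \frac{\cosh 3 - 71/8}{2\cdot 3^{n-2}},$$
which gives $[x^n]fg \leq (\cosh 3 - 87/8)/(2 \cdot 3^{n-2}) = f(3)/(2\cdot 3^{n-2})$. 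The proof then concludes by checking the exact analogue of the $m = 1$ condition $f(2) < 0$: namely $f(3) = \cosh 3 - 87/8 < 0$, which follows from elementary partial-sum estimates on $e^{\pm 3}$.

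The main obstacle is identifying the correct auxiliary $g$. Its polynomial numerator is essentially forced by the three conditions $[x^n]fg = 0$ for $n = 1, 2, 3$ under the ansatz $g(x) = q(x)/(1-x/3)$ with $\deg q = 2$ and $q$ non-negative. The pleasant surprise is that the resulting asymptotic inequality reduces once more to the single numerical condition $f(3) < 0$, mirroring exactly the role of $f(2) < 0$ in Theorem \ref{thm-analytic}; everything after the discovery of $g$ is routine (with the caveat that the small cases $n \leq 4$ must be checked by hand, since the four-term geometric identity above requires $n \geq 5$).
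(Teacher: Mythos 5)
Your proposal is correct, and every step checks out: the coefficients of $g(x)=\bigl(1+\tfrac{2}{3}x+\tfrac{1}{6}x^2\bigr)/(1-x/3)$ are indeed $g_0=g_1=1$ and $g_n=1/(2\cdot 3^{n-2})$ for $n\geq 2$, the coefficients of $fg$ vanish for $n=1,2,3$, equal $-1/36$ at $n=4$, and for $n\geq 5$ your identity $g_n-g_{n-1}+\tfrac12 g_{n-2}-\tfrac16 g_{n-3}=-3^{-(n-2)}$ together with the uniform bound on $g_m$ gives $[x^n]fg\leq f(3)/(2\cdot 3^{n-2})<0$, since $f(x)=\cosh x - x - x^3/3!-x^4/4!$ and $\cosh 3<87/8$. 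The paper uses the same overall strategy (find a non-negative $g$ with $fg=1-p$, $p$ non-negative with $p(0)=0$), but with a different auxiliary series: it takes $g(x)=1+x+\tfrac{x^2}{2!}+\tfrac{x^3}{3!}+a^4x^4+a^5x^5+\cdots$ with $a=1/2$, checks the coefficients of $fg$ for $n=4,\ldots,7$ by hand, bounds the higher coefficients by $a^n f(2)$ plus two small correction terms, and then closes the argument with a parity-split induction on $n$ anchored at $n=8,9$, the pivotal numerical fact being $f(2)<0$. Your choice of a rational $g$ with geometric ratio $1/3$ buys a cleaner finish: a single closed-form estimate valid for all $n\geq 5$, no induction and no even/odd case split, with the single numerical condition $f(3)<0$ playing exactly the role that $f(2)<0$ plays in the paper's proofs of both the $m=1$ and $m=2$ cases. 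The trade-off is that your numerator must be reverse-engineered to kill the coefficients at $n=1,2,3$, whereas the paper's $g$ (truncated $e^x$ plus geometric tail) does this automatically and generalizes more transparently to the shape of $f$ for larger $m$, which is relevant to the paper's closing remark that the method breaks down by $m=5$.
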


\begin{proof}
Let $f(x) = 1 - x +\frac{x^2}{2!} - \frac{x^3}{3!} + \sum_{k \mbox{ even, } k \geq 6} \frac{x^k}{k!}$. 
We'll again show that there is an $a>0$ so that $f(x)g(x) = 1-p(x)$, where now $g(x) = 1+x+\frac{x^2}{2!}+\frac{x^3}{3!} + a^4x^4 + a^5x^5 + \cdots$ and $p(x)$ is a non-negative series with $p(0)=0$, which as before will imply that $\frac{1}{f(x)}$ has a series with all non-negative coefficients.  It will turn out that $a=1/2$. 

We compute $[x^n]f(x)g(x)$.  Note that these series start as truncated versions of $e^{-x}$ and $e^{x}$ through the degree $3$ term, so clearly $[x^0]f(x)g(x)=1$ and $[x^n]f(x)g(x)=0$ for $n=1,2,3$.  Then
\begin{eqnarray*}
\left[x^4\right]f(x)g(x) &=& a^4 - \frac{1}{3!} + \frac{1}{2!\cdot 2!} - \frac{1}{3!}=a^4-\frac{2}{4!}\\
\left[x^5\right] f(x) g(x) &=& a^5 - a^4 + \frac{1}{3! 2!} - \frac{1}{2! 3!} = a^4(a-1)\\
\left[x^6\right]f(x)g(x) &=& a^6-a^5+\frac{a^4}{2!} - \frac{1}{3!3!} + \frac{1}{6!}\\
\left[x^7\right]f(x)g(x) &=& a^7 - a^6 + \frac{a^5}{2!} - \frac{a^4}{3!} + \frac{1}{6!} \\
\left[x^{2n}\right]f(x)g(x) &\leq& a^{2n} \left(f(1/a) \right) + \frac{1}{2!(2n-2)!} + \frac{1}{(2n)!} \qquad \text{($n \geq 4$)}\\
\left[x^{2n+1}\right]f(x)g(x) &\leq& a^{2n+1} \left( f(1/a) \right) + \frac{1}{3!(2n-3)!} + \frac{1}{(2n-1)!} \qquad \text{($n \geq 4$).}
\end{eqnarray*}

Now, note $f(x) = \cosh(x) - x - \frac{x^3}{3!} - \frac{x^4}{4!}$. Also, it can be checked that when $a=1/2$ we have that the $n=4$ through $n=7$ coefficients  in the above computations are negative. For that choice of $a$ we have $f(1/a) = f(2) = -0.2378043089...$ and so $a^x f(1/a) \leq a^x \cdot (-1/5)$.  Now
\[
a^8 f(1/a) + \frac{1}{2!6!} + \frac{1}{8!} \leq \left(\frac{1}{2}\right)^8 \cdot \left( \frac{-1}{5}\right) + \frac{4 \cdot 7 + 1}{8!} = \frac{-3\cdot 3 \cdot 7/2+29}{8!}<0
\]
and
\[
a^9 f(1/a) + \frac{1}{3!6!} + \frac{1}{8!} \leq \left(\frac{1}{2}\right)^9 \cdot \left(\frac{-1}{5}\right) + \frac{56+6}{3!8!} = \frac{-3 \cdot 3 \cdot 7 \cdot 3/2+62}{3!8!}<0.
\]
For even values larger than $9$, we inductively assume $a^{2n}f(1/a) + \frac{1}{2!(2n-2)!} + \frac{1}{(2n)!}<0$ to show the result for $a_{2n+2}$:

\[
a^{2n+2}f(1/a)+\frac{1}{2!(2n)!} + \frac{1}{(2n+2)!} \leq \left( \frac{1}{4} \right) \left( a^{2n}f(1/a) + \frac{1}{2!(2n-2)!} + \frac{1}{(2n)!} \right)<0.
\]
For odd values larger than $9$, we also inductively have 
\[
a^{2n+3}f(1/a)+\frac{1}{3!(2n-1)!} + \frac{1}{(2n+1)!} \leq \left( \frac{1}{4} \right) \left( a^{2n+1}f(1/a) + \frac{1}{2!(2n-3)!} + \frac{1}{(2n-1)!} \right)<0.
\]
This completes the proof.
\end{proof}

Unfortunately, this technique will not work for all values of $m$,
even if we allow ourselves flexibility with the parameter $a$.  Computations show that with $f(x) = \cosh(x)-x-x^3/3!-x^5/5!-\ldots-x^{2m-1}/(2m-1)!-x^{2m}/(2m)!$, with $m=5$,
and
$g(x)=1+x+x^2/2!+...+x^{2m-1}/(2m-1)! + a^{2m}x^{2m}/(1-ax)$, there is no choice of $a$ for which $f(x)g(x)=1-p(x)$ with $p(0)=0$ and all coefficients of $p(x)$ non-negative. 

\section{Proofs Using the Run Theorem} \label{sec-run-theorem}

We will now show that the ``Run Theorem'' (Theorem \ref{thm-run-theorem} below), a theorem in noncommutative symmetric function theory, and its power series counterpart (Corollary \ref{cor-run-theorem} below), can be used to prove the conclusion of non-negativity in Theorem \ref{thm-odd-endedstretched} (and its $b=1$, $r=2$ case, Theorem \ref{thm-odd-ended}) and can also be used to derive an alternate combinatorial description of the coefficients of the reciprocal.

The field of noncommutative symmetric functions was first developed in \cite{Gelfandetal}. For the theory we need we follow the exposition that is given in \cite{GesselZhuang}. Let $F$ be a field of characteristic $0$ and let $A = F \langle \langle X_1, X_2, \ldots \rangle \rangle$ be the algebra of formal power series in the noncommuting variables $X_i$ for $i \geq 1$.

Let $n \geq 0$ be an integer.  We say that $L$ is a composition of $n$, written $L \models n$, if $L$ is a sequence $L = (L_1, \ldots, L_k)$ of positive integers that sum to $n$ where $k \geq 0$ is an integer.  Note that the only composition of $0$ is the empty composition, the $0$-tuple $L=()$.  

We will now define the subalgebra $\Sym$ of $A$ of noncommutative symmetric functions. Let \[{\bf h}_n = \sum_{i_1 \leq i_2 \leq \cdots \leq i_n} X_{i_1} X_{i_2} \cdots X_{i_n}\] be the complete noncommutative symmetric function.  For any composition $L = (L_1, \ldots, L_k)$, let ${\bf h}_L = {\bf h}_{L_1} \cdots {\bf h}_{L_k}$.   We take ${\bf h}_L = 1$ for the empty composition $L = ()$. $\Sym$ is defined to be the subalgebra of $A$ consisting of all (possibly infinite) linear combinations of the ${\bf h}_L$ with coefficients in $F$. By this definition, $\Sym$ has basis $H = \{{\bf h}_L : L \mbox{ a composition}\}$ when it is viewed as a vector space over $F$.

If $s = (i_1, \ldots, i_n)$ is a sequence of positive integers, a run of $s$ is a maximal non-decreasing subsequence of consecutive entries of $s$.  Let $L(s) \models n$ be the run composition of $s$, i.e. $L=(L_1, \ldots, L_k)$ where $L_i$ is the length (i.e. number of elements) of the $i$th run in $s$.  For any composition $L \models n$, we define the ribbon non-commutative symmetric function \[{\bf r}_L = \sum_{s=(i_1, \ldots, i_n) : L(s)=L} X_{i_1} X_{i_2} \cdots X_{i_n}.\]  As shown in \cite{GesselZhuang}, $R = \{{\bf r}_L : L \mbox{ a composition}\}$ is also a basis for $\Sym$.

\begin{thm}[The Run Theorem] \label{thm-run-theorem}
Let $F$ be a field of characteristic $0$. 
Let $(a_i)_{i=0}^\infty$ be a sequence in $F$ with $a_0=1$. In $\Sym$, the algebra of noncommutative symmetric functions over $F$, we have
\[\left(\sum_{n=0}^\infty a_n {\bf h}_n \right)^{-1} = \sum_L w_L {\bf r}_L\]
where the sequence $(w_i)_{i=0}^\infty$ in $F$ is defined by the equation 
\[\left(\sum_{n=0}^\infty a_n x^n \right)^{-1} = \sum_{n=0}^\infty w_n x^n\]
in the ring $F[[x]]$ of power series in $x$ with coefficients in $F$, the sum on the right is over all compositions $L$, and $w_L = w_{L_1} \cdots w_{L_k}$ for all compositions $L=(L_1,\ldots,L_k)$.

\end{thm}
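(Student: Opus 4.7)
The plan is to verify directly that the proposed inverse $G := \sum_L w_L {\bf r}_L$ satisfies $F \cdot G = 1$ in $\Sym$, where $F := \sum_{n \ge 0} a_n {\bf h}_n$. This reduces to two combinatorial facts about the bases $H$ and $R$ that I would establish first. The first is essentially tautological: a non-decreasing sequence of length $n$ has run composition $(n)$, so ${\bf h}_n = {\bf r}_{(n)}$ for $n \ge 1$ (and ${\bf h}_0 = {\bf r}_{()} = 1$). This lets me rewrite $F = \sum_{n \ge 0} a_n {\bf r}_{(n)}$, so the product $F \cdot G$ lives entirely in the ribbon basis.

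The second, more substantive, fact is the multiplication rule for ribbons. For nonempty compositions $L = (L_1, \ldots, L_k)$ and $M = (M_1, \ldots, M_\ell)$,
\[
{\bf r}_L \, {\bf r}_M \;=\; {\bf r}_{L \cdot M} + {\bf r}_{L \odot M},
\]
where $L \cdot M$ is concatenation and $L \odot M := (L_1, \ldots, L_{k-1}, L_k + M_1, M_2, \ldots, M_\ell)$ merges the boundary parts; if either composition is empty, the merge term drops and one is left with $L \cdot M$. The proof is by direct inspection of monomials: each pair of sequences $(s, t)$ with $L(s) = L$ and $L(t) = M$ contributes $X_s X_t$, and the run composition of the concatenation $s\,t$ is $L \cdot M$ when $s_{|s|} > t_1$ (no boundary merge) and $L \odot M$ when $s_{|s|} \le t_1$. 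Conversely, every sequence with either of these compositions comes from a unique such pair via the split at position $|L|$.

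With these facts in hand, I would expand $F \cdot G$ and collect the coefficient of each ribbon ${\bf r}_M$. For $M = ()$ the only contribution is $a_0 w_{()} = 1$. For nonempty $M = (M_1, \ldots, M_\ell)$ the contributions come from three sources: the $n = 0$ term (giving $w_M$); the concatenation contribution ${\bf r}_{(M_1)} \cdot {\bf r}_{(M_2, \ldots, M_\ell)}$, together with the $\ell = 1$ boundary case ${\bf r}_{(M_1)} \cdot {\bf r}_{()}$ (both giving $a_{M_1} w_{M_2} \cdots w_{M_\ell}$, with empty product interpreted as $1$); and the merge contributions from ${\bf r}_{(n)} \cdot {\bf r}_{(L_1, M_2, \ldots, M_\ell)}$ with $n + L_1 = M_1$ and $n, L_1 \ge 1$ (giving $a_n w_{L_1} w_{M_2} \cdots w_{M_\ell}$). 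Summing, the coefficient of ${\bf r}_M$ factors as $\bigl(\sum_{j=0}^{M_1} a_{M_1-j} w_j\bigr) w_{M_2} \cdots w_{M_\ell}$, which vanishes by the defining relation $H(x) W(x) = 1$ since $M_1 \ge 1$. This proves $F \cdot G = 1$.

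The main obstacle is Step 2, the ribbon multiplication rule; in particular, one has to check carefully that splitting at position $|L|$ really gives a bijection between sequences with run composition in $\{L \cdot M,\; L \odot M\}$ and pairs $(s, t)$ with $L(s) = L$ and $L(t) = M$, and that the empty-composition cases are handled correctly so that there is no double counting. Once the multiplication rule is in place, the final bookkeeping in Step 3 is routine.
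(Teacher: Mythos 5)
The paper itself does not prove Theorem \ref{thm-run-theorem}; it quotes it from Gessel's thesis and from Zhuang and Gessel--Zhuang, so there is no internal proof to compare against. Judged on its own, your argument is correct and is essentially the standard proof of the Run Theorem: the identity ${\bf h}_n={\bf r}_{(n)}$, the ribbon product rule ${\bf r}_L{\bf r}_M={\bf r}_{L\cdot M}+{\bf r}_{L\odot M}$ (your split-at-position-$|L|$ bijection is right, including the boundary criterion $s_{|s|}\le t_1$ for a merge and the fact that $L\cdot M$ and $L\odot M$ have different numbers of parts, so there is no double counting), and then extraction of the coefficient of ${\bf r}_M$, which telescopes to $\bigl(\sum_{j=0}^{M_1}a_{M_1-j}w_j\bigr)w_{M_2}\cdots w_{M_\ell}=0$ for $M_1\ge 1$ by the defining relation in $F[[x]]$. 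Two small points you should make explicit to be airtight: collecting coefficients of ${\bf r}_M$ uses that the ribbons form a basis of $\Sym$ (stated in the paper, following \cite{GesselZhuang}), and your computation only shows $F\cdot G=1$; since $F$ has constant term $1$ it is invertible in the graded-complete algebra $\Sym$ (e.g.\ via the geometric series of $1-F$), so a one-sided inverse is the two-sided inverse and $G=F^{-1}$ follows. Neither point is a gap, just a sentence each. Your route buys a short, self-contained verification where the paper relies on the literature; the cited proofs run through the same ribbon machinery, so nothing essentially new is needed beyond what you wrote.
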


The Run Theorem was first proved in an equivalent form in \cite[Theorem 5.2]{Gesselthesis}.  The form we give here follows the presentations given in \cite[Theorem 1]{Zhuang} and \cite[Theorem 11]{GesselZhuang}.  Note that the coefficients $a_i$ and $w_i$ may actually be taken to lie in a unital $F$-algebra (see \cite[Theorem 1]{Zhuang}), but the form we give is sufficient for our needs.

As noted in \cite[Section 4.2]{GesselZhuang}, there is a homomorphism $\Phi:\Sym \to F[[x]]$ for which $\Phi({\bf h}_n) = x^n/n!$ and $\Phi({\bf r}_L) = \beta(L) x^n/n!$ where $\beta(L)$ is the number of permutations $\pi$ in $S_n$ with run-composition $L(\pi)=L$, i.e. with consecutive run-lengths as given by $L$.  Applying this $\Phi$ to the reciprocal relation in $\Sym$ in Theorem \ref{thm-run-theorem} gives the following power series version of the Run Theorem.

\begin{cor} \label{cor-run-theorem}
Let  $F$ be a field of characteristic $0$.  Let $(a_i)_{i=0}^\infty$ be a sequence in $F$ with $a_0=1$. In the ring of power series $F[[x]]$ we have
\[\left(\sum_{n=0}^\infty a_n \frac{x^n}{n!} \right)^{-1} = \sum_{n=0}^\infty b_n  \frac{x^n}{n!}\]
where
\[b_n  = \sum_{L \models n} w_L \beta(L),\] $w_L = w_{L_1} \cdots w_{L_k}$ for all compositions $L=(L_1,\ldots,L_k)$, and the sequence $(w_i)_{i=0}^\infty$ in $F$ is defined by the equation 
\[\left(\sum_{n=0}^\infty a_n x^n \right)^{-1} = \sum_{n=0}^\infty w_n x^n\]
in $F[[x]]$. 
\end{cor}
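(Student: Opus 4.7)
The plan is to apply the $F$-algebra homomorphism $\Phi:\Sym\to F[[x]]$ (defined in the paragraph preceding the corollary, following \cite[Section 4.2]{GesselZhuang}) to both sides of the identity from the Run Theorem. The explicit values $\Phi({\bf h}_n) = x^n/n!$ and $\Phi({\bf r}_L) = \beta(L)\, x^{|L|}/|L|!$ (where $|L|:=L_1+\cdots+L_k$ for $L=(L_1,\ldots,L_k)$) are exactly what is needed to turn the identity in $\Sym$ into a statement about exponential generating functions in $F[[x]]$.

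First I would observe that since $\Phi$ is a unital $F$-algebra homomorphism, it carries inverses to inverses: if $uv=vu=1$ in $\Sym$, then $\Phi(u)\Phi(v)=\Phi(v)\Phi(u)=1$ in $F[[x]]$, so $\Phi(u^{-1})=\Phi(u)^{-1}$ whenever $u$ is invertible. Applying $\Phi$ termwise to the left-hand side of Theorem \ref{thm-run-theorem} then gives
$$\Phi\left(\left(\sum_{n=0}^\infty a_n {\bf h}_n\right)^{-1}\right) = \left(\sum_{n=0}^\infty a_n \frac{x^n}{n!}\right)^{-1},$$
which is precisely the left-hand side of the corollary.

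Next I would apply $\Phi$ to the right-hand side of Theorem \ref{thm-run-theorem}:
$$\Phi\left(\sum_{L} w_L\, {\bf r}_L\right) = \sum_{L} w_L\, \beta(L)\, \frac{x^{|L|}}{|L|!}.$$
Because only finitely many compositions $L$ satisfy $|L|=n$ for any fixed $n$, I can collect terms by degree to rewrite the image as $\sum_{n=0}^\infty b_n\, x^n/n!$ with $b_n=\sum_{L \models n} w_L\, \beta(L)$. Equating the two images yields the claimed formula. The sequence $(w_n)$ appearing in Theorem \ref{thm-run-theorem} is defined by the relation $(\sum_{n=0}^\infty a_n x^n)^{-1} = \sum_{n=0}^\infty w_n x^n$, which matches the hypothesis of the corollary, so no further identification is needed.

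The one point to verify — rather than a true obstacle — is that $\Phi$ is well-defined on the (in general infinite) sums involved and that the regrouping by degree is legitimate. This is immediate from the grading: $\Sym$ is a completion of a graded $F$-algebra in which ${\bf h}_n$ and the ${\bf r}_L$ with $|L|=n$ are homogeneous of degree $n$, and $\Phi$ carries each degree-$n$ homogeneous element into $F\cdot x^n$. Hence, for each fixed $n$, only finitely many terms of the series on either side contribute to the coefficient of $x^n$ in the image, so all the manipulations above are purely formal and valid.
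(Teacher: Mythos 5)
Your proposal is correct and is essentially the paper's own argument: the paper likewise obtains the corollary by applying the homomorphism $\Phi$ with $\Phi({\bf h}_n)=x^n/n!$ and $\Phi({\bf r}_L)=\beta(L)\,x^n/n!$ to the reciprocal identity of Theorem \ref{thm-run-theorem} and collecting terms by degree. Your added remarks on inverses mapping to inverses and on the grading justifying the formal manipulations are sound elaborations of the same route.
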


We can use Corollary \ref{cor-run-theorem} to give another proof of Theorem \ref{thm-odd-endedstretched} with an equivalent combinatorial formula for the coefficient of the reciprocal.  We first need the following standard result on reciprocals in $F[[x]]$.

\begin{thm} \label{thm-ogf-reciprocal}
Let $F$ be a field of characteristic $0$.  Let $(a_i)_{i=0}^\infty$ be a sequence in $F$ with $a_0=1$. Then 
\[\left(\sum_{n=0}^\infty a_n x^n \right)^{-1} = \sum_{n=0}^\infty w_n x^n,\]
where
\[w_n = \sum_{L \models n} (-1)^{|L|} a_L,\]
and $a_L = a_{L_1} \cdots a_{L_k}$ and $|L|=k$ for each composition $L = (L_1, \ldots, L_k)$.
\end{thm}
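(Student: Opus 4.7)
The plan is to expand the reciprocal as a formal geometric series and then collect terms according to compositions. Since $a_0 = 1$, we may write
\[
\sum_{n=0}^\infty a_n x^n = 1 - \left(-\sum_{n=1}^\infty a_n x^n\right),
\]
and because the subtracted series has zero constant term, the formal geometric series identity $(1-y)^{-1} = \sum_{k \geq 0} y^k$ applies in $F[[x]]$, yielding
\[
\left(\sum_{n=0}^\infty a_n x^n\right)^{-1} = \sum_{k=0}^\infty (-1)^k \left(\sum_{n=1}^\infty a_n x^n\right)^k.
\]

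The next step is to expand the $k$-th power as an unordered sum over $k$-tuples of positive integers. Concretely,
\[
\left(\sum_{n=1}^\infty a_n x^n\right)^k = \sum_{(L_1,\ldots,L_k) \in \mathbb{Z}_{\geq 1}^k} a_{L_1} a_{L_2} \cdots a_{L_k}\, x^{L_1 + L_2 + \cdots + L_k},
\]
so the coefficient of $x^n$ on the right-hand side is $\sum_{L \models n,\, |L|=k} a_L$. Swapping the order of summation over $k$ and $n$ (legitimate in $F[[x]]$ since for each fixed $n$ only finitely many $k$ contribute) collects all compositions of $n$ into a single sum, giving
\[
\left(\sum_{n=0}^\infty a_n x^n\right)^{-1} = \sum_{n=0}^\infty \left(\sum_{L \models n} (-1)^{|L|} a_L\right) x^n,
\]
which is exactly the claimed identity. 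The $n = 0$ case is handled by the convention that the only composition of $0$ is the empty one, contributing $(-1)^0 \cdot 1 = 1$.

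There is essentially no obstacle here: the only subtleties are formal, namely justifying the geometric expansion in $F[[x]]$ (which uses only that the subtracted series has vanishing constant term) and the interchange of summations (which is valid because each coefficient of $x^n$ is a finite sum). If one prefers a more direct verification, the same identity can be proved by induction on $n$: one checks that the proposed $w_n = \sum_{L \models n}(-1)^{|L|} a_L$ satisfies the recurrence $w_0 = 1$ and $w_n = -\sum_{j=1}^n a_j w_{n-j}$ for $n \geq 1$ by splitting compositions of $n$ according to their first part $L_1 = j$, using that removing the first part turns $L \models n$ with $L_1 = j$ into $L' \models (n-j)$ with $|L'| = |L|-1$ and $a_{L'} = a_L/a_j$.
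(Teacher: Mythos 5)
Your proposal is correct and follows essentially the same route as the paper: expand the reciprocal as a formal geometric series (valid since the subtracted part has zero constant term), expand the $k$-th powers as sums over compositions with $k$ parts, and collect coefficients of $x^n$, with the empty composition handling $n=0$. The alternative recurrence argument you sketch is also fine (just avoid writing $a_{L'} = a_L/a_j$, since $a_j$ may be zero), but it is not needed.
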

\begin{proof}
We have
\begin{align*}
\left(\sum_{n=0}^\infty a_n x^n \right)^{-1} & = \left( 1 - \sum_{\ell=1}^\infty (-a_\ell) x^\ell \right)^{-1} \\
& = 1 + \sum_{k=1}^\infty \left(\sum_{\ell=1}^\infty (-a_\ell) x^\ell \right)^k \\
& = 1 + \sum_{k=1}^\infty \sum_{(L_1, \ldots, L_k)} (-1)^k a_{L_1} \cdots a_{L_k} x^{L_1+ \cdots + L_k}\\
& = 1 + \sum_{n=1}^\infty \left(\sum_{L \models n } (-1)^{|L|} a_L\right) x^n
\end{align*}
We use the formula for the geometric series in $F[[x]]$ for the second equality above.  Note that $w_0=1$ if we adopt the usual convention that the only composition of $0$ is the empty composition $L=()$ and that $a_L=1$ for this composition.
\end{proof}
In particular, we use the following corollary of Theorem \ref{thm-ogf-reciprocal}.
\begin{cor} \label{cor-ogf-reciprocal}
Let $F$ be a field of characteristic $0$. Let $S$ be a subset of the positive integers.  Then
\[\left(1 - \sum_{s \in S} x^s\right)^{-1} = \sum_{n=0}^\infty w_n x^n,\] where $w_n$ is the number of compositions of $n$, all of whose block sizes lie in $S$.
\end{cor}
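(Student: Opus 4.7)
The plan is to obtain the corollary as an immediate specialization of Theorem \ref{thm-ogf-reciprocal}, with the signs present in both factors of each summand cancelling in pairs. Concretely, I would apply Theorem \ref{thm-ogf-reciprocal} to the sequence $(a_n)_{n=0}^\infty$ defined by $a_0 = 1$, $a_s = -1$ for every $s \in S$, and $a_n = 0$ for all other positive integers $n$; then $\sum_{n=0}^\infty a_n x^n = 1 - \sum_{s \in S} x^s$, matching the left-hand side of the corollary.

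The main step is to identify the multiplicative factors $a_L$. For a composition $L=(L_1,\ldots,L_k)$ of $n$, we have $a_L = a_{L_1} \cdots a_{L_k}$. This product equals $0$ whenever at least one part $L_i$ fails to lie in $S$, and it equals $(-1)^k = (-1)^{|L|}$ whenever all parts $L_i$ belong to $S$. Substituting into the formula
\[
w_n = \sum_{L \models n} (-1)^{|L|} a_L
\]
provided by Theorem \ref{thm-ogf-reciprocal}, only compositions with every part in $S$ contribute, and each such composition contributes $(-1)^{|L|} \cdot (-1)^{|L|} = 1$. Hence $w_n$ equals the number of compositions of $n$ all of whose parts lie in $S$, as claimed.

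There is no real obstacle here; the only care needed is to verify that the edge case $n=0$ behaves properly (the empty composition is the unique composition of $0$, its parts trivially lie in $S$, and it contributes $1$, matching $w_0 = 1$ from the expansion of the reciprocal), and to note that convergence issues in $F[[x]]$ do not arise because, just as in the proof of Theorem \ref{thm-ogf-reciprocal}, the inner sum $\sum_{s \in S} x^s$ has zero constant term, so geometric-series manipulations are valid in the $(x)$-adic topology on $F[[x]]$.
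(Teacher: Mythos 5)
Your proposal is correct and matches the paper's proof: both apply Theorem \ref{thm-ogf-reciprocal} with $a_0=1$, $a_s=-1$ for $s\in S$, and $a_n=0$ otherwise, and observe that $(-1)^{|L|}a_L$ equals $1$ exactly when every part of $L$ lies in $S$ and $0$ otherwise. The extra remarks on $n=0$ and formal convergence are fine but not needed beyond what the paper already does.
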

\begin{proof}
We apply Theorem \ref{thm-ogf-reciprocal} with $a_0=1$ and for all positive integers $n$, $a_n = -1$ if $n \in S$ and $a_n = 0$ if $n \not \in S$.  Then for any composition $L = (L_1, \ldots, L_k)$, $(-1)^{|L|} a_L = 1$ if $L$ is a composition, all of whose blocks are in $S$, and $(-1)^{|L|} a_L = 0$ otherwise.  Thus $w_n$ is the number of compositions of $n$, all of whose block sizes lie in $S$.
\end{proof}

\begin{proof}[Proof of Theorem \ref{thm-odd-endedstretched}]
Let ${\cal O}$ be the set of odd positive integers and ${\cal E}$ be the set of even positive integers.  We define the function $n^*$ on the non-negative integers by setting $n^*(k) = krb/2$ if $k$ is even, and $n^*(k)=((k-1)r/2 +1)b$ if $k$ is odd.  Thus $n^*(0)=0$, $n^*({\cal O})=\{(jr+1)b : r \geq 0,j \geq 0\} = {\cal O}^*$, and $n^*({\cal E})=\{jrb : r \geq 1,j\geq 1\} = {\cal E}^*$.

Suppose $A \subseteq {\cal O}^*$, $B \subseteq {\cal E}^*$, $b \in A$, and $A \cup B$ is odd-ended.  Then $A \cup B$ is the image under the map $n^*$ of the union of a set of finitely or infinitely many intervals of integers of the form $I_1 = [c_1,d_1]$, $I_2 = [c_2,d_2], \ldots$ satisfying the following conditions:
\begin{enumerate}
    \item[(i)] $c_1=0$ but otherwise $c_i$ and $d_i$ are odd for all $i$, 
    \item[(ii)] $0 = c_1 < d_1 < c_2 \leq d_2 < c_3 \leq d_3 < \cdots$,  
    \item[(iii)] if there are finitely many intervals, the last interval $I_m$ may also be of the form $I_m = [c_m, \infty)$.
\end{enumerate}

Let
\[ f(x) = 1 - \sum_{a \in A} \frac{x^a}{a!} + \sum_{b \in B} \frac{x^b}{b!} = \sum_k  \sum_{c_k \leq  i \leq d_k} (-1)^i \frac{x^{n^*(i)}}{n^*(i)!}\]
and
\[ g(x) = 1 - \sum_{a \in A} x^a + \sum_{b \in B} x^b = \sum_k  \sum_{c_k \leq  i \leq d_k} (-1)^i x^{n^*(i)}.\]
Briefly, we will use algebraic manipulations to show that ordinary generating function (o.g.f.) $g(x)$ has a reciprocal whose o.g.f. has non-negative coefficients $w_n$.  Corollary \ref{cor-run-theorem} then implies that the exponential generating function (e.g.f.) $f(x)$ has a reciprocal whose e.g.f. has non-negative coefficients $b_n$.  We will use Corollary \ref{cor-ogf-reciprocal} to show that $w_n$ counts the number of compositions of a certain form.  Thus the formula for $b_n$ given by Corollary \ref{cor-run-theorem} will be a count of permutations that are combinatorially weighted according to their run-lengths.  We now fill in the details.

Let $h(x) = \sum_{k=0}^\infty (x^{krb} - x^{(kr+1)b})$.  Note that
\begin{align*}
h(x) &= (1-x^b) \sum_{k=0}^\infty x^{krb}\\
&= (1-x^b)/(1-x^{rb}) \\
& = 1/(1 + x^b + x^{2b} + \cdots + x^{(r-1)b}).
\end{align*}
Thus we have
\[1/h(x) = 1 + x^b + x^{2b} + \cdots + x^{(r-1)b}\]
and
\[1 - 1/h(x) = - (x^b + x^{2b} + \cdots + x^{(r-1)b}).\]

Note that
\[  \sum_{c_1 \leq  i \leq d_1} (-1)^i x^{n^*(i)} = h(x) \left(1 - x^{n^*(d_1+1)} \right)\] and that for all $k \geq 2$
\begin{align*}
\sum_{c_k \leq  i \leq d_k} (-1)^i x^{n^*(i)} & = h(x) \left(x^{n^*(c_k-1)} - x^{n^*(d_k+1)}\right)  - x^{n^*(c_k-1)} \\
& = h(x) \left( (1-1/h(x)) x^{n^*(c_k-1)} -  x^{n^*(d_k+1)} \right) \\
&= h(x) \left( - (x^b + x^{2b} + \cdots + x^{(r-1)b } ) x^{n^*(c_k-1)} -  x^{n^*(d_k+1)} \right)
\end{align*}
Thus $g(x) = h(x)(1 - \sum_{s \in S} x^s )$ where
\begin{eqnarray*}
S &=& \{n^*(c_k-1) + j b : k \geq 2, 1 \leq j \leq r-1\} \cup \{n^*(d_k+1) : k \geq 1\}\\
&=& \{n^*(c_k)+jb: k \geq 2,0\leq j \leq r-2\} \cup \{n^*(d_k+1):k \geq 1\}.
\end{eqnarray*}
So
\[1/g(x)  = (1 + x^b + x^{2b} + \cdots x^{(r-1)b} ) \left(1 - \sum_{s \in S} x^s \right)^{-1}.\]
By Corollary \ref{cor-ogf-reciprocal}, \[1/g(x) = \sum_{n \geq 0} w_n x^n,\] 
where $w_n$ is the number of compositions of $n$ whose block sizes must lie in $S$, where we also allow the first block to have size $b, 2b, \ldots, (r-1)b$.  Clearly $w_n$ is non-negative. 

We then apply Corollary \ref{cor-run-theorem} to get that
\[1/f(x) = \sum_{n \geq 0} b_n \frac{x^n}{n!}\]
where \[b_n = \sum_{L \models n} w_L \beta(L).\]
Clearly $b_n$ is also non-negative.  This formula can also be considered a combinatorial formula for $b_n$.  One obtains $b_n$ by summing over all permutations $\pi$ of $[n]$ so that each permutation $\pi$ is weighted by $w_L  = w_{L_1} \cdots w_{L_k}$ if it has run-composition $L(\pi) = (L_1, \ldots, L_k)$.
\end{proof}
Note that this proof of Theorem \ref{thm-odd-endedstretched} appeals to the theory of non-commutative symmetric functions as well as algebraic manipulations of generating functions whereas the proofs of Theorems \ref{thm-odd-ended} and \ref{thm-odd-endedstretched} given in Sections \ref{sec-pfofodd-ended} and \ref{sec-stretched} are purely combinatorial.   Those proofs also appeal to sign-reversing involutions, a tactic used in \cite{EGS} to give non-negativity results and combinatorial formulas for the coefficients of the \emph{compositional inverses} of a large class of power series.  

Note also that the combinatorial count given by the Run Theorem is of a slightly different form that the ones given in Corollaries \ref{cor-partitioninterpstretched} and \ref{cor-perminterpstretched}. The translation from this interpretation to that in Corollary \ref{cor-perminterpstretched} amounts to considering up to $r-1$ singleton blocks at the end of the algorithm (which are fixed), along with viewing freeze skip pairs as a single block (with size $n^*(d_k+1)$), and viewing the blocks with at most $r-2$ singletons to the right as another single block (with size $n^*(c_k)+jb$).

Lastly, we note that it is not possible to use the Run Theorem to answer Conjecture \ref{conj-sufficient} as the corresponding o.g.f.s often have some negative coefficients.

\medskip

{\bf Acknowledgements:} We thank Ira Gessel for helpful discussions and for suggesting the use of the Run Theorem.

\bibliographystyle{alphaurl}
\bibliography{4-arxiv-submission-2}

\end{document}